\newtheorem{theorem}{Theorem}[section]
\newtheorem{lemma}[theorem]{Lemma}
\newtheorem{example}[theorem]{Example}
\renewcommand{\epsilon}{\varepsilon}
\newcommand{\eps}{\varepsilon}
\newcommand{\lbd}{\lambda}
\newcommand{\dint}{\displaystyle\int}
\newcommand{\dr}{\, dr}
\DeclareMathOperator{\Id}{Id}
\DeclareMathOperator{\e}{e}
\newcommand{\prts}[1]{\left(#1\right)}
\newcommand{\prtsr}[1]{\left[#1\right]}
\newcommand{\pfrac}[2]{\prts{\dfrac{#1}{#2}}}
\newcommand{\abs}[1]{\left|#1\right|}
\newcommand{\set}[1]{\left\{#1\right\}}
\newcommand{\setm}[1]{\setminus\set{#1}}
\newcommand{\norm}[1]{\left\|#1\right\|}
   \newcommand{\N}{\ensuremath{\mathds N}}
   \newcommand{\R}{\ensuremath{\mathds R}}
\def\cB{\EuScript{B}}
\def\cV{\EuScript{V}}
\def\cX{\EuScript{X}}
\def\cP{\EuScript{P}}
\def\cW{\EuScript{W}}
\begin{document}
\title[Generalized nonuniform dichotomies and local stable manifolds]
   {Generalized nonuniform dichotomies and local stable manifolds}
\author[Ant\'onio J. G. Bento]
   {Ant\'onio J. G. Bento}
\address{Ant\'onio J. G. Bento\\
   Departamento de Matem\'atica\\
   Universidade da Beira Interior\\
   6201-001 Covilh\~a\\
   Portugal}
\email{bento@ubi.pt}
\author[C\'esar M. Silva]{C\'esar M. Silva$^1$
   \protect\footnote{$^1$ C\MakeLowercase{orresponding author}}}
\address{C\'esar M. Silva\\
   Departamento de Matem\'atica\\
   Universidade da Beira Interior\\
   6201-001 Covilh\~a\\
   Portugal}
\email{csilva@ubi.pt}
\urladdr{www.mat.ubi.pt/~csilva}
\date{\today}
\subjclass[2000]{37D10, 34D09, 37D25}
\keywords{Invariant manifolds, nonautonomous differential equations,
   nonuniform generalized dichotomies}
\begin{abstract}
   We establish the existence of local stable manifolds for semiflows generated
   by non\-lin\-ear per\-tur\-ba\-tions of nonau\-ton\-o\-mous ordinary linear
   differential equations in Banach spaces, assuming the existence of a general
   type of nonuniform dichotomy for the evolution operator that contains the
   nonuniform exponential and polynomial dichotomies as a very particular case.
   The family of dichotomies considered allow situations for which the
   classical Lyapunov exponents are zero. Additionally, we give new examples of
   application of our stable manifold theorem and study the behavior of the
   dynamics under perturbations.
\end{abstract}
\maketitle
\section{Introduction}
The concept of nonuniform hyperbolicity was introduced by
Pesin~\cite{Pesin-IANSSSR-1976,Pesin-UMN-1977,Pesin-IANSSSR-1977} and
generalizes the classical concept of (uniform) hyperbolicity by allowing the
rates of expansion and contraction to vary from point to point. For
nonuniformly hyperbolic trajectories, Pesin~\cite{Pesin-IANSSSR-1976} was able
to obtain a stable manifold theorem in the finite dimensional setting. Then, in
\cite{Ruelle-IHESPM-1979} Ruelle gave a proof of this theorem based on the
study of perturbations of products of matrices occurring in Oseledets'
multiplicative ergodic theorem~\cite{Oseledets-TMMS-1968}. Another proof, based
on the classical work of Hadamard, was obtained by Pugh and Shub
in~\cite{Pugh-Shub-TAMS-1989} and uses graph transform techniques. In the
infinite dimensional setting, Ruelle ~\cite{Ruelle-AM-1982} proved, following
his approach in \cite{Ruelle-IHESPM-1979}, a stable manifold theorem in Hilbert
spaces under some compactness assumptions. For transformations in Banach spaces
and under some compactness and invertibility assumptions, Ma\~n\'e established
the existence of stable manifolds in~\cite{Mane-LNM-1983} and
in~\cite{Thieullen-AIHPAN-1987} Thieullen weakened Ma\~n\'e's hypothesis.

In the context of nonautonomous differential equations, stable manifold
theorems were also obtained, assuming that the evolution operator have bounds
that are nonuniform, more precisely assuming that the evolution operator admits
a nonuniform exponential dichotomy, a notion introduced by Barreira and Valls
in~\cite{Barreira-Valls-JDE-2006} and inspired both in the classical notion of
exponential dichotomy introduced by Perron in~\cite{Perron-MZ-1930} and in the
notion of nonuniformly hyperbolic trajectory introduced by Pesin
in~\cite{Pesin-IANSSSR-1976,Pesin-UMN-1977,Pesin-IANSSSR-1977}. For more
details we refer the reader to the book~\cite{Barreira-Valls-LNM-2008}.

Recently it has been addressed the problem of obtaining stable manifolds for
perturbations of linear ordinary differential equations assuming the existence
of nonuniform dichotomies that are not exponential. Namely,
in~\cite{Bento-Silva-QJM-2012} were obtained local and global stable manifolds
for polynomial dichotomies and in~\cite{Bento-Silva-arXiv:0905.4935v1} it was proved the existence of global stable manifolds for a generalized type of dichotomy that
includes both the polynomial and exponential cases. Also in
\cite{Barreira-Valls-JFA-2009-257-(1018-1029)} Barreira and Valls obtained
local stable manifolds for perturbations of linear equations assuming a
dichotomy that follows growth rates of the form $\e^{\rho(t)}$ where
$\rho:\R^+_0 \to \R^+_0$ is an increasing differentiable function satisfying
\begin{equation}\label{eq1}
   \lim_{t \to +\infty} \frac{\log t}{\rho(t)} = 0.
\end{equation}
This definition, in spite of being very general, does not include some of the
growth rates considered in this paper, namely, due to~\eqref{eq1}, this
definition do not include for example the polynomial case studied in
~\cite{Bento-Silva-QJM-2012} and the growth rates of
Examples~\ref{ex:DicotomiaPolinomialLimite1}
and~\ref{ex:DicotomiaPolinomialLimite2}. In the discrete time setting, the
existence of global and local stable manifolds for perturbations of some
nonuniform polynomial dichotomies was discussed
in~\cite{Bento-Silva-JFA-2009}.

The main objective of this paper is to obtain local stable manifolds for
perturbations of nonautonomous linear ordinary differential equations, assuming
that the evolution operator associated with the linear equation admits a
dichotomy with growth rates given by increasing functions that go to infinity
(and therefore more general than the mentioned above). In fact we do not need
to assume condition~\eqref{eq1} and we allow growth rates given by
non-differentiable functions as well as different growth rates in the uniform
and in the nonuniform parts of the dichotomy. We also would like to emphasize
that the dichotomies considered here include as a particular case the ones
considered in~\cite{Barreira-Valls-JDE-2006} and~\cite{Bento-Silva-QJM-2012} and the theorems proved there, respectively, for nonuniform exponential dichotomies
and for nonuniform polynomial dichotomies, are particular cases of the result
presented in this paper. We also give new examples of growth rates to which our
local stable manifold theorem can be applied. We emphasize that the Lyapunov
exponent considered in \cite{Barreira-Valls-LNM-2008}, for Hilbert spaces, is
zero or infinity for most of the dichotomies considered in this paper.

The content of the paper is as follows: in Section~\ref{section:MR} we
establish the setting, we define the dichotomies and we state the main theorem;
in Section~\ref{section:ex} we give examples of nonuniform
$(\mu,\nu)$-dichotomies for each differentiable growth rates in our family of
growth rates and examples of growth rates that verify the conditions of the
main theorem;
in Section~\ref{section:proof} we prove the main theorem; finally, in
Section~\ref{section:perturbations}, we study how the manifolds obtained vary
with the perturbations considered.
\section{Main result}\label{section:MR}
Let $X$ be a Banach space and denote by $B(X)$ the space of bounded linear
operators acting on~$X$. Given a continuous function $A \colon \R^+_0 \to
B(X)$, we consider the initial value problem
\begin{equation} \label{eq:ivp-li}
   v' = A(t) v, \ v(s) = v_s
\end{equation}
with $s \ge 0$ and $v_s \in X$. We assume that each solution
of~\eqref{eq:ivp-li} is global and denote the evolution operator associated
with~\eqref{eq:ivp-li} by $T(t,s)$, i.e., $v(t) = T(t,s)v_s$ for $t \ge 0$. Note that the operator $T(t,s)$ is invertible (see Deimling \cite[Section 1.4]{Deimling-LNM_596-1977}).

We say that  an increasing function $\mu \colon \R^+_0 \to [1,+\infty[$ is a
growth rate if $\mu(0)=1$ and
   $$ \lim_{t \to + \infty} \mu(t) = +\infty.$$

Let $\mu$ and $\nu$ be growth rates. We say that equation \eqref{eq:ivp-li}
admits a \textit{nonuniform $(\mu,\nu)$-dichotomy} in $\R^+_0$ if, for each $t
\ge 0$, there are projections $P(t)$ such that
\begin{equation*}
   P(t)T(t,s) = T(t,s) P(s), \ t, s \ge 0
\end{equation*}
and constants $D \ge 1$, $a < 0 \le b$ and $\eps \ge 0$ such that, for every $t
\ge s \ge 0$,
\begin{align}
   & \| T(t,s)P(s)\|
      \le D \pfrac{\mu(t)}{\mu(s)}^a \nu(s)^\eps,\label{eq:dich-1}\\
   & \|T(t,s)^{-1} Q(t)\|
      \le D \pfrac{\mu(t)}{\mu(s)}^{-b} \nu(t)^\eps, \label{eq:dich-2}
\end{align}
where $Q(t)=\Id-P(t)$ is the complementary projection. When $\eps = 0$ we say
that we have a \textit{uniform $(\mu,\nu)$-dichotomy}  or simply a
\textit{$(\mu,\nu)$-dichotomy}.

For each $t \ge 0$, we define the linear subspaces
   $$ E(t)=P(t)X \quad \text{ and } \quad F(t)=Q(t)X.$$
Without loss of generality, we always identify the spaces $E(t) \times F(t)$
and $E(t) \oplus F(t)$ as the same vector space. The unique solution of~\eqref{eq:ivp-li}
can be written in the form
   $$ v(t) = \prts{U(t,s) \xi, V(t,s) \eta}, \ t \ge s$$
where $v_s = \prts{\xi, \eta} \in E(s) \times F(s)$ and
   $$ U(t,s) := P(t) T(t,s) P(s) \quad \text{ and } \quad
      V(t,s) := Q(t) T(t,s) Q(s).$$

In this paper we are going to address the problem of obtaining stable manifolds
for the nonlinear problem
\begin{equation} \label{eq:ivp-nonli}
   v' = A(t) v + f(t,v), \ v(s) = v_s
\end{equation}
when equation~\eqref{eq:ivp-li} admits a nonuniform $(\mu,\nu)$-dichotomy and
there are $c>0$ and $q>0$ such that the perturbations $f \colon \R^+_0 \times X
\to X$ verify, for $u,v \in X$ and $t \in \R_0^+$, the following conditions
\begin{align}
   & f(t,0)=0, \label{loc:cond-f-0}\\
   & \| f(t,u)- f(t,v)\|\le c \|u-v\| ( \|u\|+\|v\| )^q \label{loc:cond-f-2}.
\end{align}

Note that, making $v=0$ in~\eqref{loc:cond-f-2}, we have
\begin{equation} \label{loc:cond-f-3}
   \|f(t,u)\| \le c \|u\|^{q+1}.
\end{equation}
for every $u \in X$.

Writing the unique solution of \eqref{eq:ivp-nonli} in the form
   $$ (x(t,s,v_s),y(t,s,v_s)) \in E(t) \times F(t),$$
problem \eqref{eq:ivp-nonli} is equivalent to the following problem
\begin{align}
   & x(t) = U(t,s) \xi + \int_s^t U(t,r) f(r,x(r),y(r)) \dr,
      \label{eq:split-1a}\\
   & y(t) = V(t,s) \eta + \int_s^t V(t,r) f(r,x(r),y(r)) \dr,
   \label{eq:split-1b}
\end{align}
where $v_s = (\xi, \eta) \in E(s) \times F(s)$.

For each $(s,v_s)$ we consider the semiflow
\begin{equation} \label{def:Psi}
   \Psi_\tau(s,v_s)
   = \prts{s+\tau, x(s+\tau,s,v_s), y(s+\tau, s, v_s)}, \tau \ge0.
\end{equation}

Given $\delta > 0$ and a decreasing function $\beta \colon \R_0^+ \to \R_0^+$
we use the following notation
\begin{equation*}
   G_{\delta, \beta}
   = \bigcup\limits_{s \in \R^+_0} \set{s} \times B_{s,\delta,\beta},
\end{equation*}
where $B_{s,\delta,\beta}$ is the open ball of $E(s)$ centered at $0$ and with
radius $\delta \beta(s)$.

Let $\cX_{\delta,\beta}$ be the space of functions
   $$\phi \colon G_{\delta,\beta} \to X$$
such that, for every $(s,\xi), (s,\bar\xi) \in G_{\delta,\beta}$ the following
conditions hold
\begin{align}
   & \phi(s, \xi) \in F(s), \label{loc:cond-phi-1}\\
   & \phi(s,0)=0, \label{loc:cond-phi-0}\\
   & \| \phi(s,\xi) - \phi(s,\bar\xi)\|
      \le \|\xi-\bar\xi\| \label{loc:cond-phi-2}.
\end{align}
By~\eqref{loc:cond-phi-0}, putting $\bar\xi= 0$ in~\eqref{loc:cond-phi-2}, we
immediately conclude that
\begin{equation} \label{loc:cond-phi-3}
   \|\phi(s,\xi)\| \le \|\xi\|
\end{equation}
for every $(s,\xi) \in G_{\delta,\beta}$. We equip the space
$\cX_{\delta,\beta}$ with the metric defined by
\begin{equation} \label{metric:local:X}
   \|\phi - \psi\|'
   = \sup\set{\dfrac{\|\phi(s,\xi) - \psi(s,\xi)\|}{\|\xi\|}
      : (s, \xi) \in G_{\delta,\beta}, \ \xi \ne 0}
\end{equation}
for every $\phi, \psi \in \cX_{\delta,\beta}$. By~\eqref{loc:cond-phi-3} it
follows that, for every $(s,\xi) \in G_{\delta,\beta}$, $\|\phi(s,\xi)\| \le
\delta \beta(s) \le \delta \beta(0)$ and this implies that $\cX_{\delta,\beta}$
is a complete metric space with the metric given by~\eqref{metric:local:X}.

We also have to consider the space $\cX^*_{\delta,\beta}$ of continuous
functions $\phi : G \to X$, with
   $$ G=\bigcup_{s \in \R_0^+} \, \{s\} \times E(s),$$
such that $\phi(s,\xi) \in F(s)$ for every $(s,\xi) \in G$, the restriction
$\phi|_{G_{\delta,\beta}} \in \cX_{\delta,\beta}$ and
   $$ \phi(s,\xi)
      = \phi \prts{s, \dfrac{\delta \beta(s) \xi}{\|\xi\|}}
         \text{ whenever $\xi \not\in B_{s,\delta,\beta}$.}$$
Furthermore, since for every $\phi \in \cX_{\delta,\beta}$, we have a unique
Lipschitz extension of $\phi$ to $\bigcup_{t \ge 0} \set{t} \times
\overline{B_{t,\delta,\beta}}$, where $\overline{B_{t,\delta,\beta}}$ is the
closure of the ball $B_{t,\delta,\beta}$, there is a one-to-one correspondence
between $\cX_{\delta,\beta}$ and $\cX^*_{\delta,\beta}$. This one-to-one
correspondence allows us to define a metric in $\cX^*_{\delta,\beta}$ using the metric in $\cX_{\delta,\beta}$. Namely, this metric can be defined by
\begin{equation} \label{metric:local:X*}
   \|\phi - \psi\|'
   = \|(\phi - \psi)|_{G_{\delta,\beta}}\|',
\end{equation}
for every $\phi, \psi \in \cX^*_{\delta,\beta}$ and where the right hand side
is given by~\eqref{metric:local:X}. With this metric $\cX^*_{\delta,\beta}$ is
a complete metric space. Moreover, given $\phi, \psi \in \cX^*_{\delta,\beta}$,
it follows that
\begin{align}
   & \|\phi(s,\xi) - \phi(s,\bar\xi)\| \le 2 \|\xi - \bar\xi\|
         \label{loc:cond-phi-2-2}\\
   & \|\phi(s,\xi) - \psi(s,\xi)\| \le \|\phi - \psi\|' \|\xi\|
      \label{loc:cond-phi-2-3}
\end{align}
for every $(s,\xi), (s,\bar\xi) \in G$.

For every $\phi \in \cX_{\delta,\beta}$ we define the graph
\begin{equation} \label{def:V_phi,delta,beta}
   \cV_{\phi,\delta,\beta}
   = \set{(s,\xi, \phi(s,\xi)) : (s,\xi) \in G_{\delta,\beta}}.
\end{equation}

We now formulate our stable manifold theorem.

\begin{theorem} \label{thm:local}
   Given a Banach space $X$, let $f : \R^+_0 \times X \to X$ be a function
   satisfying \eqref{loc:cond-f-0} and \eqref{loc:cond-f-2} for some $c > 0$
   and $q > 0$. Suppose that equation~\eqref{eq:ivp-li} admits a nonuniform
   $(\mu,\nu)$-dichotomy in $\R^+_0$ for some growth rates $\mu$ and $\nu$, $D
   \ge 1$, $a < 0 \le b$ and $\eps \ge 0$. Assume that
   \begin{equation} \label{eq:CondicaoTeo}
      \lim_{t \to +\infty} \mu(t)^{a-b} \nu(t)^\eps = 0
   \end{equation}
   and
   \begin{equation} \label{loc-int-conv}
      \int_0^{+\infty} \mu(r)^{aq} \nu(r)^\eps \dr \text{ is convergent}.
   \end{equation}
   Define the functions $\beta,\tilde{\beta} \colon \R_0^+ \to \R_0^+$ by
   \begin{equation}\label{def:beta}
      \beta(t)
      = \dfrac{\mu(t)^a}{\nu(t)^{\eps(1+1/q)}
         \prts{\dint_t^{+\infty} \mu(r)^{aq} \nu(r)^\eps \dr}^{1/q}}.
   \end{equation}
   and $\tilde{\beta}(t)=\beta(t)\nu(t)^{-\eps}$ and suppose that
   \begin{equation}\label{loc:eq:decreasing}
      \beta(t) \text{ and } \mu(t)^a \beta(t)^{-1}
         \text{ are decreasing.}
   \end{equation}
   Then, for every $C > D$, choosing $\delta > 0$ sufficiently small,
   there is a unique $\phi \in \cX_{\delta,\beta}$ such that
   \begin{equation} \label{thm:local:invar}
      \Psi_\tau\prts{\cV_{\phi,\frac{\delta}{C},\tilde{\beta}}}
      \subset \cV_{\phi,\delta,\beta}
   \end{equation}
   for every $\tau \ge 0$, where $\Psi_\tau$ is given by~\eqref{def:Psi} and
   $\cV_{\phi,\frac{\delta}{C},\tilde{\beta}}$ and $\cV_{\phi,\delta,\beta}$
   are given by~\eqref{def:V_phi,delta,beta}. Furthermore, given $s \ge 0$, we
   have
   \begin{equation*}
      \| \Psi_{t-s}(p_{s,\xi}) - \Psi_{t-s}(p_{s,\bar\xi})\|
      \le 2 C \pfrac{\mu(t)}{\mu(s)}^a \nu(s)^\eps
         \|\xi - \bar\xi\|\\
   \end{equation*}
   for every $t \ge s$ and $\xi, \bar\xi \in
   B_{s,\frac{\delta}{C},\tilde\beta}$, where
   $p_{s,\xi} = (s, \xi, \phi(s,\xi))$.
\end{theorem}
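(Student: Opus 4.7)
The approach is the classical Lyapunov-Perron method adapted to $(\mu,\nu)$-dichotomies: I realise the stable manifold as the graph of a fixed point of an operator $\Phi$ on the complete metric space $\cX^*_{\delta,\beta}$. The equation for $\Phi$ arises from applying the backward operator $V(s,r):=V(r,s)^{-1}$ to~\eqref{eq:split-1b} and requiring that $V(s,t)y(t)\to 0$ as $t\to+\infty$ (a trajectory on the stable leaf must not pick up the unstable direction), which formally forces
\begin{equation*}
   \eta=-\int_s^{+\infty}V(s,r)f\bigl(r,x(r),y(r)\bigr)\dr.
\end{equation*}
For each $\phi\in\cX^*_{\delta,\beta}$ and each $(s,\xi)$ I first build the $E$-component $x_\phi(\cdot,s,\xi)$ of the trajectory constrained to the graph of $\phi$, as the unique fixed point of
\begin{equation*}
   x(t)=U(t,s)\xi+\int_s^tU(t,r)f\bigl(r,x(r),\phi(r,x(r))\bigr)\dr
\end{equation*}
in the complete set of continuous curves $x\colon[s,+\infty)\to X$ with $x(t)\in E(t)$ and $\|x(t)\|\le 2D(\mu(t)/\mu(s))^a\nu(s)^\eps\|\xi\|$. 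Contractivity here is a consequence of~\eqref{eq:dich-1},~\eqref{loc:cond-f-2} and~\eqref{loc:cond-phi-2-2} once $\delta$ is small, using~\eqref{loc-int-conv}. The monotonicity hypothesis on $\mu^a/\beta$ in~\eqref{loc:eq:decreasing} is exactly what ensures that for $\xi\in B_{s,\delta/C,\tilde\beta}$ the bound on $x_\phi$ keeps $x_\phi(t)\in B_{t,\delta,\beta}$, so that $\phi(t,x_\phi(t))$ lies in the defining domain of $\phi$.

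I then set
\begin{equation*}
   (\Phi\phi)(s,\xi)=-\int_s^{+\infty}V(s,r)f\bigl(r,x_\phi(r),\phi(r,x_\phi(r))\bigr)\dr
\end{equation*}
and verify that $\Phi$ maps $\cX^*_{\delta,\beta}$ into itself and is a contraction in the metric~\eqref{metric:local:X*}. All required bounds combine~\eqref{eq:dich-2},~\eqref{loc:cond-f-2} and the $x_\phi$-estimate; the integrand reduces to a constant multiple of $\mu(s)^{b-a(q+1)}\nu(s)^{\eps(q+1)}\mu(r)^{a(q+1)-b}\nu(r)^\eps\|\xi\|^{q+1}$ (or the corresponding Lipschitz variant obtained by comparing two initial data or two profiles $\phi,\psi$), and extracting the decreasing bounded factor $\mu(r)^{a-b}\le\mu(s)^{a-b}$ leaves exactly $\mu(s)^{-aq}\nu(s)^{\eps(q+1)}\int_s^{+\infty}\mu(r)^{aq}\nu(r)^\eps\dr=\beta(s)^{-q}$ times $\|\xi\|^{q+1}$. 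Inserting the constraint $\|\xi\|\le\delta\beta(s)$ therefore turns every such estimate into $O(\delta^q)\|\xi\|$, $O(\delta^q)\|\xi-\bar\xi\|$, or $O(\delta^q)\|\phi-\psi\|'\|\xi\|$, each of which can be made $\le 1$ or strictly $<1$ by choosing $\delta$ small enough. Banach's theorem yields the unique $\phi\in\cX^*_{\delta,\beta}$, and the one-to-one correspondence recalled around~\eqref{metric:local:X*} transfers uniqueness to $\cX_{\delta,\beta}$.

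The invariance~\eqref{thm:local:invar} is then a consequence of the construction: for $\xi\in B_{s,\delta/C,\tilde\beta}$ and $\tau\ge 0$, a change of variable in the integral defining $\Phi\phi$ identifies $\phi(s+\tau,x_\phi(s+\tau,s,\xi))$ with the $F$-component at time $s+\tau$ of the perturbed trajectory starting at $(\xi,\phi(s,\xi))$, and the bound on $x_\phi(s+\tau)$ combined with~\eqref{loc:eq:decreasing} puts the result inside $\cV_{\phi,\delta,\beta}$. The Lipschitz bound on $\Psi_{t-s}$ restricted to the manifold then follows from comparing $x_\phi(t,s,\xi)$ and $x_\phi(t,s,\bar\xi)$ in the space used for the inner fixed point and applying~\eqref{loc:cond-phi-2-2}. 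The main technical obstacle throughout will be the intricate bookkeeping of the competing powers of $\mu$ and $\nu$: the explicit form of $\beta$ in~\eqref{def:beta} is precisely what makes the $s$-dependent prefactors collapse to $\beta(s)^{-q}$, and the asymmetry $\tilde\beta=\beta\nu^{-\eps}$ is what absorbs the extra $\nu(s)^\eps$ produced by~\eqref{eq:dich-1} so that $B_{s,\delta/C,\tilde\beta}$ on the left of~\eqref{thm:local:invar} is carried into $B_{s+\tau,\delta,\beta}$ on the right. Condition~\eqref{eq:CondicaoTeo} enters in the uniform-in-$s$ dominance of $\mu(r)^{a(q+1)-b}\nu(r)^\eps$ by $\mu(r)^{aq}\nu(r)^\eps$ on $[s,+\infty)$, without which the reduction to~\eqref{loc-int-conv} would fail.
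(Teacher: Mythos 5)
Your proposal follows essentially the same Lyapunov--Perron route as the paper's proof: an inner fixed point for the $E$-component $x_\phi$ on a space of curves with the a priori bound coming from~\eqref{eq:dich-1}, followed by an outer fixed point for $\phi$ in $\cX^*_{\delta,\beta}$ via the operator $\Phi$, with the explicit form of $\beta$ in~\eqref{def:beta} chosen precisely so that every $s$-dependent prefactor collapses to an $O(\delta^q)$ constant, and the decreasing conditions~\eqref{loc:eq:decreasing} together with the factor $\nu^{-\eps}$ in $\tilde\beta$ accounting for the forward invariance $B_{s,\delta/C,\tilde\beta}\to B_{t,\delta,\beta}$.

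One factual slip is worth flagging. You attribute condition~\eqref{eq:CondicaoTeo} to the uniform-in-$s$ dominance of $\mu(r)^{a(q+1)-b}\nu(r)^\eps$ by $\mu(r)^{aq}\nu(r)^\eps$ on $[s,+\infty)$, claiming that without it the reduction to~\eqref{loc-int-conv} would fail. In fact that step only needs $\mu(r)^{a-b}\le \mu(s)^{a-b}$, which holds because $a-b<0$ follows already from the structural hypothesis $a<0\le b$; no asymptotic condition on $\nu$ is involved. The genuine role of~\eqref{eq:CondicaoTeo} is to guarantee
\begin{equation*}
   \lim_{t\to+\infty}\|V(t,s)^{-1}\phi_{x_\phi}(t,\xi)\|
   \le \lim_{t\to+\infty} 2CD\pfrac{\mu(t)}{\mu(s)}^{a-b}\nu(t)^\eps\nu(s)^\eps\|\xi\| = 0,
\end{equation*}
i.e.\ the vanishing of the boundary term needed to pass from the variation-of-constants identity~\eqref{eq:split-3b} to the Lyapunov--Perron formula~\eqref{loc:eq:phi} (the paper's Lemma~\ref{lemma:local:equiv}, part $a$). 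You do state the requirement $V(s,t)y(t)\to 0$, but then misassign which hypothesis delivers it. A second small point: your inner fixed-point space carries only the bound $\|x(t)\|\le 2D(\mu(t)/\mu(s))^a\nu(s)^\eps\|\xi\|$ for a fixed $\xi$, whereas both the final Lipschitz estimate on $\Psi_{t-s}$ and the estimates comparing $f_{x_\phi,\phi}(r,\xi)$ to $f_{x_\phi,\phi}(r,\bar\xi)$ need Lipschitz dependence on $\xi$ (the paper's~\eqref{loc:cond-x-1}); you should either build this into the space (as the paper does in defining $\cB_{s,\delta,\beta}$) or derive it a posteriori, since it does not come for free from the single-curve bound.
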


We now use Theorem~\ref{thm:local} to establish the existence of stable manifolds for solutions of an ordinary differential equations that admit a nonuniform $\prts{\mu,\nu}$-hyperbolic behavior. Let $F \colon \R_0^+ \times X \to X$ be a $C^1$ function and let $v_0(t)$ be a solution of the equation
   \begin{equation} \label{eq:v'=F(t,v)}
      v' = F(t,v).
   \end{equation}
   Clearly, $v(t)$ is a solution of~\eqref{eq:v'=F(t,v)} if and only if $y(t) = v(t)-v_0(t)$ is a solution of
   \begin{equation}\label{eq:y'=A(t)y+f(t,y)}
      y' = A(t)y+f(t,y),
   \end{equation}
   where
   \begin{equation}\label{eq:A(t)=dF/dv(t,v_0(t))}
      A(t) = \dfrac{\partial F}{\partial v}(t,v_0(t))
   \end{equation}
   and
   \begin{equation}\label{eq:f(t,y)=F(t,y+v_0(t))-F(t,v_0(t))-A(t)y}
      f(t,y) = F(t,y+v_0(t))-F(t,v_0(t))-A(t)y.
   \end{equation}
   It is obvious that $f(t,0) = 0$ for every $t \in \R_0^+$. Moreover, since
      $$ \|f(t,y)-f(t,z)\|
         \le \sup_{\theta \in [0,1]}
            \norm{\dfrac{\partial f}{\partial y}(t, (1-\theta)y + \theta z)} \cdot \|y-z\|$$
   and
      $$ \dfrac{\partial f}{\partial y}(t,y)
         = \dfrac{\partial F}{\partial v}(t, y + v_0(t)) - A(t),$$
   if we suppose that
   \begin{equation}\label{eq:||dF/dv(t,y+v_0(t))-A(t)||<=c...}
      \norm{\dfrac{\partial F}{\partial v}(t,y+v_0(t)) - A(t)}
      \le c \|y\|^q \text{ \ \ for every $t \ge 0$ and every $y \in X$,}
   \end{equation}
   we have
   \begin{align*}
      \|f(t,y)-f(t,z)\|
      & \le c \sup_{\theta \in [0,1]}
         \|(1-\theta)y + \theta z)\|^q \cdot \|y-z\|\\
      & \le c \|y-z\| \prts{\|y\|+\|z\|}^q
   \end{align*}
   for every $t \in \R_0^+$ and for every $y,z \in X$. This proves that if~\eqref{eq:||dF/dv(t,y+v_0(t))-A(t)||<=c...} is satisfied, then the function $f$ defined by~\eqref{eq:f(t,y)=F(t,y+v_0(t))-F(t,v_0(t))-A(t)y} satisfies~\eqref{loc:cond-f-0} and~\eqref{loc:cond-f-2}.

   When $A(t)$ given by~\eqref{eq:A(t)=dF/dv(t,v_0(t))} admits a nonuniform $\prts{\mu,\nu}$-dichotomy in $\R_0^+$ we say that $v_0(t)$ is a \textit{nonuniform hyperbolic solution} of equation~\eqref{eq:v'=F(t,v)}. Therefore if $v_0(t)$ is a nonuniform hyperbolic solution of equation~\eqref{eq:v'=F(t,v)} and~\eqref{eq:||dF/dv(t,y+v_0(t))-A(t)||<=c...} is satisfied we can applied Theorem~\ref{thm:local} to equation~\eqref{eq:y'=A(t)y+f(t,y)} and we obtain immediately the following theorem analogous to Theorem 4 of Barreira and Valls~\cite{Barreira-Valls-JDE-2006}.

   \begin{theorem}\label{thm:trajectoria}
      Let $F \colon \R_0^+ \times X \to X$ be a function of class $C^1$ and let $v_0(t)$ be a nonuniform $\prts{\mu,\nu}$-hyperbolic solution of~\eqref{eq:v'=F(t,v)} such that~\eqref{eq:||dF/dv(t,y+v_0(t))-A(t)||<=c...} is satisfied for some $c , q > 0$ and conditions~\eqref{eq:CondicaoTeo},~\eqref{loc-int-conv} and~\eqref{loc:eq:decreasing} are satisfied. Then for every $C > D$, there is $\delta > 0$ and a unique $\phi \in \cX_{\delta,\beta}$ such that
      if $\prts{s,v_s} \in \cW_{\phi,\delta/C,\tilde\beta}$, then $\prts{t,v(t)} \in \cW_{\phi,\delta,\beta}$ for every $t \ge s$, where $v(t)=v(t,v_s)$ is the unique solution of~\eqref{eq:v'=F(t,v)} for $t \ge s$ with $v(s) = v_s$,
         $$ \cW_{\phi,\delta/C,\tilde\beta} = \set{\prts{s,\xi,\phi(s, \xi)}
            + (0,v_0(s)) \colon \prts{s,\xi} \in G_{\delta/C,\tilde\beta}},$$
      and
         $$ \cW_{\phi,\delta,\beta} = \set{\prts{s,\xi,\phi(s, \xi)}
            + (0,v_0(s)) \colon \prts{s,\xi} \in G_{\delta,\beta}}.$$
      Moreover,
         $$ \|v(t,v_s) - v(t,\overline v_s)\|
            \le C \pfrac{\mu(t)}{\mu(s)}^a \nu(s)^\eps
               \|v_s - \overline v_s\|$$
      for every $t \ge s$ and every $(s,v_s), (s,\overline{v}_s) \in \cW_{\phi,\delta/C,\tilde\beta}$.
   \end{theorem}
\section{Examples}\label{section:ex}
We start with an example of a nonautonomous linear equation that admits a
nonuniform $(\mu,\nu)$-dichotomy with arbitrary differentiable growth rates
$\mu$ and $\nu$.

\begin{example} \label{ex:dicho}
   Let $\eps > 0$ and $a < 0 \le b$. Put $\omega = \eps/2$ and let $\mu,\nu$
   be arbitrary differentiable growth rates. The differential
   equation in $\R^2$ given by
   \begin{equation} \label{eq:example}
      \begin{cases}
         u' = \prts{\dfrac{a \mu'(t)}{\mu(t)}
            + \dfrac{\omega \nu'(t)}{\nu(t)} (\cos t -1)
            - \omega \log \nu(t) \sin t} u\\[3mm]
         v' = \prts{\dfrac{b \mu(t)}{\mu'(t)}
            - \dfrac{\omega \nu'(t)}{\nu(t)} (\cos t -1)
            + \omega \log \nu(t) \sin t} v
      \end{cases}
   \end{equation}
   has the following evolution operator
      $$ T(t,s)(u,v) = (U(t,s)u, V(t,s)v),$$
   where
   \begin{align*}
      & U(t,s) = \pfrac{\mu(t)}{\mu(s)}^a
         e^{\omega \log \nu(t) (\cos t -1) - \omega \log \nu(s) (\cos s
         -1)},\\
      & V(t,s) = \pfrac{\mu(t)}{\mu(s)}^b
         e^{-\omega \log  \nu(t) (\cos t -1) + \omega \log \nu(s) (\cos s
         -1)}.
   \end{align*}
   Using the projections $P(t) \colon \R^2 \to \R^2$ defined by $P(t)(u,v)
   =(u,0)$ we have
   \begin{align*}
      & \| T(t,s)P(s)\|
         = \abs{U(t,s)}
         \le \pfrac{\mu(t)}{\mu(s)}^a \nu(s)^\eps\\
      & \|T(t,s)^{-1} Q(t)\|
         = \abs{V(t,s)^{-1}}
         \le \pfrac{\mu(t)}{\mu(s)}^{-b} \nu(t)^\eps
   \end{align*}
   and thus~\eqref{eq:example} admits a $(\mu,\nu)$-dichotomy. Moreover,
   if $t = 2 k \pi$ and $s = (2k -1) \pi$, $k \in \N$, then
      $$ U(t,s) = \pfrac{\mu(t)}{\mu(s)}^a \nu(s)^\eps$$
   and this ensures us that the nonuniform part can not be removed.
\end{example}

Now we will give examples of application of Theorem~\ref{thm:local}.

\begin{example}
   If $\mu(t) = \nu(t) = \e^t$, we get the local stable manifold theorem
   obtained by Barreira and Valls in~\cite{Barreira-Valls-JDE-2006}. In fact,
   in this case condition~\eqref{eq:CondicaoTeo} becomes $a + \eps < b$ and
   condition~\eqref{loc-int-conv} becomes $aq+\eps < 0$. The function $\beta$
   is given by
      $$ \beta(t)
         = \abs{a음 + \eps}^{1/q} \e^{- \eps (1 + 2/q)t}$$
   which is a decreasing function and
      $$ \mu(t) \beta(t)^{-1}
         =  \abs{a음 + \eps}^{1/q} \e^{(a + \eps(1+2/q))t}$$
   is decreasing if $a + \eps(1+2/q) < 0$.
\end{example}

\begin{example}
   Making $\mu(t) = \nu(t) = 1 +t$, we get the local stable manifold theorem
   obtained by the present authors in~\cite{Bento-Silva-QJM-2012}. In this case, condition~\eqref{eq:CondicaoTeo} becomes $a + \eps < b$ and
   condition~\eqref{loc-int-conv} becomes $aq+\eps +1 < 0$. Moreover,
      $$ \beta(t)
         = \abs{a음 + \eps + 1}^{1/q} (1+t)^{- (\eps (1 + 2/q) + 1/q)}$$
   is a decreasing function and
      $$ \mu(t) \beta(t)^{-1}
         =  \abs{a음 + \eps +1 }^{1/q} (1+t)^{(a + \eps(1+2/q)+1/q)}$$
   is a decreasing function if $a + \eps(1+2/q)+1/q < 0$.
\end{example}

In the next examples, we consider new growth rates for which
Theorem~\ref{thm:local} holds. Recall that Example~\ref{ex:dicho} allows us to
construct an example of a differential equation whose evolution operator has a
dichotomy with the growth rates given.

\begin{example}\label{ex:DicotomiaPolinomialLimite1}
   Consider
      $$ \mu(t) = (1+t) (1+\log(1+t))^\lbd
         \ \ \ \text{ and } \ \ \
         \nu(t) = 1+\log(1+t)$$
   with $\lbd > 0$. Then
      $$ \lim_{t \to +\infty} \mu(t)^{a-b} \nu(t)^\eps
         = \lim_{t \to +\infty} \, (1+t)^{a-b}
            (1+\log(1+t))^{(a-b)\lbd+\eps}
         = 0$$
   and
      $$ \int_0^{+\infty} \mu(t)^{aq} \nu(t)^\eps \, dt
         = \int_0^{+\infty} (1+t)^{aq} (1+\log(1+t))^{aq\lbd+\eps}\, dt$$
   is convergent if $aq < -1$ or $aq = -1 \ \wedge \ \eps - \lbd < -1$. When
   $aq=-1$  and  $\eps - \lbd < -1$ we have that
      $$ \beta(t)
         = (\lbd - \eps -1)^{1/q} (t+1)^{-1/q}
            (1 + \log(1+t))^{-\eps(1+2/q) - 1/q}$$
   is a decreasing function and
      $$ \mu(t)^a \beta(t)^{-1}
         = (\lbd - \eps -1)^{-1/q} (1 + \log(1+t))^{\eps(1+2/q) + 1/q -
         \lbd/q}$$
   is decreasing if \, $\eps(1+2/q) + 1/q - \lbd/q<0$. Hence, taking into
   account that $\eps(1+2/q) + 1/q - \lbd/q<0$ implies that $\eps - \lbd < -1$,
   for these growth rates we have a local stable manifold theorem if $aq=-1$
   and $\eps(1+2/q) + 1/q - \lbd/q<0$.
\end{example}

\begin{example}\label{ex:DicotomiaPolinomialLimite2}
   Let
      $$ \mu(t) = (1+t) (1+\log(1+t))(1+\log(1+(\log(1+t)))^\lbd$$
   and
      $$ \nu(t) = 1+\log(1+(\log(1+t))$$
   with $\lbd > 0$. Then
      \[ \begin{split}
            & \lim_{t \to +\infty} \mu(t)^{a-b} \nu(t)^\eps\\
            & = \lim_{t \to +\infty} \, (1+t)^{a-b} (1+\log(1+t))^{a-b}
                  (1+\log(1+\log(1+t)))^{(a-b)\lbd+\eps}\\
            & = 0
         \end{split}\]
   and
      $$ \int_0^{+\infty} \mu(t)^{aq} \nu(t)^\eps \, dt
         = \int_0^{+\infty} (1+t)^{aq} (1+\log(1+t))^{aq}
         (1+\log(1+\log(1+t)))^{aq\lbd+\eps}\, dt$$
   is convergent if $aq < -1$ or $aq = -1 \ \wedge \ \eps - \lbd < -1$. When
   $aq=-1$  and  $\eps - \lbd < -1$ we have that
      $$ \beta(t)
         = (\lbd - \eps -1)^{1/q} (t+1)^{-1/q}
            (1 + \log(1+t))^{-1/q} (1 + \log(1+\log(1+t)))^{-\eps(1+2/q) -
            1/q}$$
   is a decreasing function and
      $$ \mu(t)^a \beta(t)^{-1}
         = (\lbd - \eps -1)^{-1/q}
            (1 + \log(1+\log(1+t)))^{\eps(1+2/q) + 1/q - \lbd/q}$$
   is decreasing if \, $\eps(1+2/q) + 1/q - \lbd/q<0$. Hence, if $aq=-1$ and
   $\eps(1+2/q) + 1/q - \lbd/q<0$ we have again a local stable manifold
   theorem.
\end{example}

We now consider a family of differential equations to which Theorem~\ref{thm:trajectoria} can be applied.

\begin{example}
   Consider the nonautonomous system of ODEs
   \begin{equation}~\label{eq:x'=y(x-alpha(t))+alpha'(t),y'=x(y-beta(t))+beta'(t)}
   \begin{cases}
     x'=y(x-\alpha(t))+\alpha'(t)\\
     y'=x(y-\beta(t))+\beta'(t)
   \end{cases}
   \end{equation}
   where $\alpha$ and $\beta$ are $C^1$ functions, and assume that the solution $v_0(t)=(\alpha(t),\beta(t))$ of~\eqref{eq:x'=y(x-alpha(t))+alpha'(t),y'=x(y-beta(t))+beta'(t)} is a nonuniform $\prts{\mu,\nu}$-hyperbolic solution for some growth rates $\mu$ and $\nu$. Putting
      $$ F(t,x,y)=(y(x-\alpha(t))+\alpha'(t),x(y-\beta(t))+\beta'(t)),$$
   it is easy to see that, given $(x,y) \in \R^2$, we have
     $$ \norm{\dfrac{\partial F}{\partial v}(t,(x,y)+v_0(t))
            - \dfrac{\partial F}{\partial v}(t,v_0(t))}
         \le \sqrt{2\sqrt{2}} \norm{(x,y)},$$
   and~\eqref{eq:||dF/dv(t,y+v_0(t))-A(t)||<=c...} holds with $c=\sqrt{2\sqrt{2}}$ and $q=1$. Note that, if $\alpha(t)$ and $\beta(t)$ are the coefficients in the linear system~\eqref{eq:example}, we get a nonuniform $\prts{\mu,\nu}$-hyperbolic solution. Therefore, if conditions~\eqref{eq:CondicaoTeo},~\eqref{loc-int-conv} and~\eqref{loc:eq:decreasing} hold for the growth rates $\mu$ an $\nu$, the invariant manifolds and decay estimates in Theorem~\ref{thm:trajectoria} hold for the solution $v_0(t)=(\alpha(t),\beta(t))$ of system~\eqref{eq:x'=y(x-alpha(t))+alpha'(t),y'=x(y-beta(t))+beta'(t)}.
\end{example}
\section{Proof of Theorem~\ref{thm:local}}\label{section:proof}
From~\eqref{eq:split-1a} and~\eqref{eq:split-1b} we conclude that, to
prove~\eqref{thm:local:invar}, we must have
\begin{align}
   & x(t,\xi) = U(t,s) \xi  + \int_s^t U(t,r)
      f(r,x(r,\xi),\phi(r,x(r,\xi))) \dr, \label{eq:split-3a}\\
   & \phi(t,x(t,\xi)) = V(t,s) \phi(s,\xi) + \int_s^t V(t,r)
      f(r,x(r,\xi),\phi(r,x(r,\xi))) \dr \label{eq:split-3b}
\end{align}
for every $s \ge 0$, $t \ge s$ and $\xi \in
B_{s,\frac{\delta}{C},\tilde\beta}$. We are going to prove
that~\eqref{eq:split-3a} and~\eqref{eq:split-3b} hold using the Banach fixed
point theorem.

Thus, let $\cB_{s,\delta,\beta}$ be the space of functions
   $$ x : [s,+\infty[ \times B_{s,\delta,\beta} \to X$$
such that, for every $t \ge s$ and $\xi, \bar\xi \in B_{s,\delta,\beta}$, we
have
\begin{align}
   & x(t,\xi) \in E(t), \label{loc:cond-x-in-E(t)}\\
   & x(s, \xi) = \xi, \ x(t,0) = 0, \label{loc:cond-x-0}\\
   & \| x(t,\xi)- x(t,\bar\xi)\|
      \le C \pfrac{\mu(t)}{\mu(s)}^a \nu(s)^{\eps}
         \|\xi - \bar \xi\|. \label{loc:cond-x-1}
\end{align}
Setting $\bar\xi =0$ in~\eqref{loc:cond-x-1} we obtain the following estimate
\begin{equation}\label{loc:cond-x-2}
   \| x(t,\xi)\| \le C \pfrac{\mu(t)}{\mu(s)}^a \nu(s)^{\eps} \|\xi\|
   \le C \delta \pfrac{\mu(t)}{\mu(s)}^a \nu(s)^\eps \beta(s).
\end{equation}
We equip $\cB_{s,\delta,\beta}$ with the metric defined by
\begin{equation} \label{metric_B_s,delta,beta}
   \|x - y\|'
   = \sup\set{\dfrac{\|x(t,\xi) - y(t,\xi)\|}{\|\xi\|}
      \pfrac{\mu(t)}{ \mu(s)}^{-a}\nu(s)^{-\eps}
      \colon t \ge s, \ \xi \in B_{s,\delta,\beta}\setm{0}}
\end{equation}
for every $x, y \in \cB_{s,\delta,\beta}$. With this metric
$\cB_{s,\delta,\beta}$ is a complete metric space.

Given $x \in \cB_{s,\delta,\beta}$  and $\phi \in \cX^*_{\delta,\beta}$ we use
the following notation
\begin{equation*}
   \phi_x(r,\xi) = \phi(r,x(r,\xi)) \text{ and }
   f_{x,\phi}(r,\xi) = f(t,x(r,\xi),\phi_x(r,\xi)).
\end{equation*}

\begin{lemma} \label{lemma:local:aux1}
   For every $\phi \in \cX^*_{\delta,\beta}$, choosing $\delta > 0$
   sufficiently small, there is one and only one $x = x_\phi \in
   \cB_{s,\delta,\beta}$ such that
   \begin{equation} \label{loc:eq:x=U+int}
       x(t,\xi)
       = U(t,s) \xi + \int_s^t U(t,r) f_{x,\phi}(r,\xi) \dr
   \end{equation}
   for every $t \ge s$ and $\xi \in B_{s,\delta,\beta}$. Moreover, choosing
   $\delta >0$ sufficiently small, we have
   \begin{equation} \label{loc:eq:norm_x_phi-x_psi}
      \|x_\phi(t,\xi) - x_\psi(t,\xi)\|
      \le C \pfrac{\mu(t)}{\mu(s)}^a \|\xi\| \cdot \|\phi - \psi\|'
   \end{equation}
   for every $\phi, \psi \in \cX^*_{\delta,\beta}$, every $t \ge s$ and every
   $\xi \in B_{s,\delta,\beta}$.
\end{lemma}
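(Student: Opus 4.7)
The plan is to apply the Banach fixed point theorem to the operator $J_\phi$ on the complete metric space $\cB_{s,\delta,\beta}$ defined by
\[
(J_\phi x)(t,\xi) := U(t,s)\xi + \int_s^t U(t,r) f_{x,\phi}(r,\xi) \, dr.
\]
The membership $(J_\phi x)(t,\xi)\in E(t)$, the initial value $(J_\phi x)(s,\xi)=\xi$, and $(J_\phi x)(t,0)=0$ are immediate from the ranges of the projections and from $x(r,0)=0$, $\phi(r,0)=0$, $f(r,0)=0$. The content of ``$J_\phi$ preserves $\cB_{s,\delta,\beta}$'' is thus the Lipschitz estimate \eqref{loc:cond-x-1}. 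Combining \eqref{eq:dich-1}, \eqref{loc:cond-f-2}, \eqref{loc:cond-phi-2-2} and the bounds \eqref{loc:cond-x-1}--\eqref{loc:cond-x-2} on $x$, I get
\[
\|f_{x,\phi}(r,\xi) - f_{x,\phi}(r,\bar\xi)\| \le 3cC(6C\delta)^q \beta(s)^q \nu(s)^{\eps(q+1)} (\mu(r)/\mu(s))^{a(q+1)} \|\xi-\bar\xi\|,
\]
and then the identity $\beta(s)^q \nu(s)^{\eps(q+1)}\int_s^{+\infty} \mu(r)^{aq}\nu(r)^\eps\, dr = \mu(s)^{aq}$, a direct rewriting of \eqref{def:beta}, collapses the $r$-integral of $\|U(t,r)\|$ times this bound. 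The outcome is
\[
\|(J_\phi x)(t,\xi) - (J_\phi x)(t,\bar\xi)\| \le \bigl[D\nu(s)^\eps + 3cCD(6C\delta)^q\bigr](\mu(t)/\mu(s))^a \|\xi-\bar\xi\|,
\]
which is dominated by $C(\mu(t)/\mu(s))^a\nu(s)^\eps\|\xi-\bar\xi\|$ as soon as $3cCD(6C\delta)^q \le C - D$, a condition attainable for small $\delta$ since $C>D$ and $\nu(s)^\eps \ge 1$.

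The same manipulation applied to $\|(J_\phi x)(t,\xi) - (J_\phi y)(t,\xi)\|$, using $\|x(r,\xi)-y(r,\xi)\|\le \|x-y\|'\|\xi\|(\mu(r)/\mu(s))^a\nu(s)^\eps$ in place of \eqref{loc:cond-x-1}, produces the pointwise bound $3cD(6C\delta)^q \|x-y\|' \|\xi\|(\mu(t)/\mu(s))^a$; passing to the metric \eqref{metric_B_s,delta,beta} and shrinking $\delta$ further so that $3cD(6C\delta)^q \le 1/2$ makes $J_\phi$ a strict contraction and yields the unique fixed point $x_\phi$. For the dependence on $\phi$, I split $x_\phi - x_\psi = (J_\phi x_\phi - J_\phi x_\psi) + (J_\phi x_\psi - J_\psi x_\psi)$; the first term admits the contraction bound just obtained, while the second, whose integrand is $f(r, x_\psi, \phi(r, x_\psi)) - f(r, x_\psi, \psi(r, x_\psi))$, is estimated via \eqref{loc:cond-phi-2-3} applied at $y=x_\psi(r,\xi)$ together with the same $\beta$-identity, producing the pointwise bound $cCD(6C\delta)^q (\mu(t)/\mu(s))^a \|\xi\| \|\phi-\psi\|'$. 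Setting $M:=\sup \|x_\phi-x_\psi\|(t,\xi)(\mu(t)/\mu(s))^{-a}/\|\xi\|$ and combining the two estimates gives $M \le 3cD(6C\delta)^q M + cCD(6C\delta)^q \|\phi-\psi\|'$; a final smallness requirement $4cD(6C\delta)^q \le 1$ rearranges this into $M \le C\|\phi-\psi\|'$, which is exactly \eqref{loc:eq:norm_x_phi-x_psi}.

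The main obstacle is the careful bookkeeping of exponents: the definition \eqref{def:beta} of $\beta$ is engineered precisely so that the factors $\beta(s)^q$, $\nu(s)^{\eps(q+1)}$, $\mu(s)^{-a(q+1)}$ generated by the nonlinear estimate combine with $\int_s^t \mu(r)^{aq}\nu(r)^\eps\, dr$ to leave exactly the decay $(\mu(t)/\mu(s))^a$, with no residual $s$-growth that would obstruct the Lipschitz or contraction constants. Once this algebraic cancellation is set up, everything else in the proof is routine.
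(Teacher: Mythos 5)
Your proof is correct and follows the paper's Banach fixed-point scheme for the existence and uniqueness of $x_\phi$, with matching constants (your $3cC(6C\delta)^q = 2^q3^{q+1}cC^{q+1}\delta^q$ coincides with the paper's coefficient, and your use of the identity $\beta(s)^q\nu(s)^{\eps(q+1)}\int_s^{+\infty}\mu(r)^{aq}\nu(r)^\eps\,dr = \mu(s)^{aq}$ is exactly the paper's~\eqref{loc:equa:fund}). Where you genuinely diverge is in the proof of~\eqref{loc:eq:norm_x_phi-x_psi}. You split $x_\phi - x_\psi = (J_\phi x_\phi - J_\phi x_\psi) + (J_\phi x_\psi - J_\psi x_\psi)$, bound the first piece by the contraction estimate and the second via~\eqref{loc:cond-phi-2-3}, and close the resulting self-referential inequality for $M := \sup\|x_\phi(t,\xi)-x_\psi(t,\xi)\|(\mu(t)/\mu(s))^{-a}/\|\xi\|$. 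The paper instead runs the Picard iterates $y_{n+1}=J_\phi y_n$ and $z_{n+1}=J_\psi z_n$ from the common seed $z(t,\xi)=U(t,s)\xi$, proves the bound by induction on $n$, and passes to the limit $n\to\infty$; this avoids any a priori hypothesis on the size of $\|x_\phi-x_\psi\|$. Your a priori approach is shorter and arguably cleaner, but it does silently presuppose $M<+\infty$; that this holds is immediate from~\eqref{loc:cond-x-2} (both fixed points satisfy it, so $M\le 2C\nu(s)^\eps$), and you should state it, since that is precisely the point the paper's induction is designed to sidestep. With that one sentence added, both routes are fully rigorous and give the same conclusion.
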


\begin{proof}
   In $\cB_{s,\delta,\beta}$ we define an operator $J = J_{\phi}$ by
      $$ (Jx)(t,\xi) = U(t,s) \xi + \int_s^t U(t,r) f_{x,\phi}(r,\xi) \dr.$$
   Obviously, $(Jx)(s,\xi) = \xi$ for every $\xi \in B_{s,\delta,\beta}$ and
   from~\eqref{loc:cond-x-0},~\eqref{loc:cond-phi-0} and~\eqref{loc:cond-f-0}
   it follows that $(Jx)(t,0) = 0$ for every $t \ge s$. Moreover, $Jx$
   satisfies~\eqref{loc:cond-x-in-E(t)} for every $t \ge s$ and every $\xi \in
   B_{s,\delta,\beta}$.

   By~\eqref{loc:cond-f-2} and~\eqref{loc:cond-phi-2-2} it follows for every $r
   \ge s$ and every $\xi \in B_{s,\delta,\beta}$ that
   \begin{align*}
      \| f_{x,\phi}(r,\xi) - f_{x,\phi}(r,\bar\xi) \|
      & \le c \left( \| x(r,\xi)-x(r,\bar\xi) \|
         + \| \phi_x(r,\xi) - \phi_x(r,\bar\xi) \| \right) \times \\
      & \phantom{\le} \times (\|x(r,\xi)\|+\|\phi_x(r,\xi)\|+\|x(r,\bar\xi)\|
         +\| \phi_x(r,\bar\xi)\| )^q \\
      & \le 3^{q+1} c \, \|x(r,\xi)-x(r,\bar\xi)\|
         \left( \|x(r,\xi)\| + \|x(r,\bar\xi)\| \right)^q
   \end{align*}
   and by~\eqref{loc:cond-x-1} and~\eqref{loc:cond-x-2} we get the following
   estimate
   \begin{equation} \label{loc:eq:norm:der:f-f}
      \| f_{x,\phi}(r,\xi) - f_{x,\phi}(r,\bar\xi) \|
      \le 2^q 3^{q+1} c \, C^{q+1} \delta^q \pfrac{\mu(r)}{\mu(s)}^{\! aq+a}
         \nu(s)^{\eps(q+1)} \beta(s)^q \| \xi-\bar\xi \|.
   \end{equation}
   From~\eqref{eq:dich-1}, the last estimate and because by~\eqref{def:beta} we
   have
   \begin{equation} \label{loc:equa:fund}
      \mu(s)^{-aq} \nu(s)^{\eps(q+1)} \beta(s)^q \dint_s^{+\infty}
      \mu(r)^{aq}\nu(r)^{\eps} \dr
      = 1,
   \end{equation}
   we obtain the following estimate
   \begin{align*}
      & \int_s^t \|U(t,r)\| \cdot
         \|f_{x,\phi}(r,\xi)-f_{x,\phi}(r,\bar\xi)\| \dr\\
      & \le 2^q 3^{q+1} c \, C^{q+1} D \delta^q \pfrac{\mu(t)}{\mu(s)}^a
         \mu(s)^{- aq} \nu(s)^{\eps(q+1)} \beta(s)^q \|\xi-\bar\xi\|
         \int_s^t \mu(r)^{aq} \nu(r)^{\eps} \dr \\
      & \le 2^q 3^{q+1} c \, C^{q+1} D \delta^q
         \pfrac{\mu(t)}{\mu(s)}^a \|\xi-\bar\xi\|
   \end{align*}
   and using again~\eqref{eq:dich-1} it follows that
   \begin{align*}
      & \|(J_\phi x)(t,\xi) - (J_\phi x)(t,\bar\xi)\|\\
      & \le \|U(t,s)\| \|\xi-\bar\xi\| + \int_s^t \|U(t,r)\|
         \cdot \|f_{x,\phi}(r,\xi)-f_{x,\phi}(r,\bar\xi)\| \dr \\
      & \le \prts{D + 2^q 3^{q+1} c \, C^{q+1} D \delta^q}
         \pfrac{\mu(t)}{\mu(s)}^a \nu(s)^\eps \|\xi-\bar\xi\|.
   \end{align*}
   Therefore, since $C > D$, choosing $\delta>0$ sufficiently small we have
      $$ \|(J_\phi x)(t,\xi) - (J_\phi x)(t,\bar\xi)\|
         \le C \left( \frac{\mu(t)}{\mu(s)} \right)^a \nu(s)^\eps
         \|\xi-\bar\xi\|,$$
   and this implies the inclusion $J(\cB_{s,\delta,\beta}) \subseteq
   \cB_{s,\delta,\beta}$.

   Now we are going to prove that if $\delta$ is sufficiently small $J_\phi$ is
   a contraction. Given $x, y \in \cB_{s,\delta,\beta}$,
   from~\eqref{loc:cond-f-2} and~\eqref{loc:cond-x-2} we have for every $r \ge
   s$ and every $\xi \in B_{s,\delta,\beta}$
   \begin{align*}
      \|f_{x,\phi}(r,\xi) - f_{y,\phi}(r,\xi)\|
      & \le c (\|x(r,\xi)-y(r,\xi)\|+\|\phi_x(r,\xi)-\phi_y(r,\xi)\|) \times\\
      & \hspace{1cm} \times (\| x(r,\xi)\| + \|\phi_x(r,\xi)\|
         +\|y(r,\xi)\| + \|\phi_y(r,\xi) \|)^q \\
      & \le 3^{q+1} c \|x(r,\xi)-y(r,\xi)\| (\|x(r,\xi)\|+\|y(r,\xi)\|)^q \\
      & \le 2^q 3^{q+1} c \, C^q \delta^q \pfrac{\mu(r)}{\mu(s)}^{aq}
         \nu(s)^{\eps q} \beta(s)^q \| x(r,\xi) - y(r,\xi)\|\\
      & \le 2^q 3^{q+1} c \, C^q \delta^q \pfrac{\mu(r)}{\mu(s)}^{aq+a}
         \nu(s)^{\eps (q+1)} \beta(s)^q \|x - y\|' \|\xi\|
   \end{align*}
   and by~\eqref{eq:dich-1} and~\eqref{loc:equa:fund} it follows that
   \begin{align*}
      & \|(J_\phi x)(t,\xi)-(J_\phi y)(t,\xi)\| \\
      & \le \int_s^t \|U(t,r)\| \|f_{x,\phi}(r,\xi)-f_{y,\phi}(r,\xi)\| \dr\\
      & \le 2^q 3^{q+1} c \, C^q D \delta^q \pfrac{\mu(t)}{\mu(s)}^a
         \mu(s)^{-aq} \nu(s)^{\eps (q+1)} \beta(s)^q \|x - y\|' \|\xi\|
         \int_s^t \mu(r)^{aq}\nu(r)^{\eps} \dr \\
      & \le 2^q 3^{q+1} c \, C^q D \delta^q \pfrac{\mu(t)}{\mu(s)}^a
         \|x - y\|' \|\xi\|.
   \end{align*}
   Therefore
      $$ \|J_\phi x - J_\phi y\|'
         \le 2^q 3^{q+1} c \, C^q D \delta^q \|x-y\|'$$
   and choosing $\delta > 0$ sufficiently small, $J_\phi$ is a contraction. By
   the Banach fixed point theorem, $J_\phi$ has a unique fixed point $x_\phi
   \in \cB_{s,\delta,\beta}$ and $x_\phi$ verifies $\eqref{loc:eq:x=U+int}$.

   Now we will prove~\eqref{loc:eq:norm_x_phi-x_psi}. Let $\phi, \psi \in
   \cX_{\delta,\beta}^*$ and let $z \in \cB_{s,\delta,\beta}$ be defined by
   $z(t,\xi) = U(t,s)\xi$ for every $t \ge s$ and every $\xi \in
   B_{s,\delta,\beta}$. Putting $z_1 = y_1 = z$ and $y_{n+1} = J_\phi y_n$ and
   $z_{n+1} = J_\psi z_n$ for each $n \in \N$, we have
      $$ \|x_\phi - x_\psi\|'
         = \lim_{n \to \infty} \|y_n - z_n\|'.$$
   Hence, to prove~\eqref{loc:eq:norm_x_phi-x_psi} it is enough to prove that
   for each $n \in \N$ we have
   \begin{equation} \label{loc:norm:y_n-z_n}
      \|y_n(t,\xi) - z_n(t,\xi)\|
      \le C \pfrac{\mu(t)}{\mu(s)}^a  \|\xi\| \cdot \|\phi - \psi\|'
   \end{equation}
   for every $t \ge s$  and every $\xi \in B_{s,\delta,\beta}$. We are going
   to
   prove~\eqref{loc:norm:y_n-z_n} by mathematical induction on $n$. For $n=1$
   there is nothing to prove. Suppose that~\eqref{loc:norm:y_n-z_n} is
   true for $n$. Then
   from~\eqref{loc:cond-f-2},~\eqref{loc:cond-phi-2-2},~\eqref{loc:cond-x-2},
   ~\eqref{loc:cond-phi-2-3} and~\eqref{loc:norm:y_n-z_n} we have, since
   $\nu(s) \ge 1$,
   \begin{align*}
      & \|f_{y_n,\phi}(r,\xi) - f_{z_n,\psi}(r,\xi)\| \\
      & \le c \prts{\|y_n(r,\xi) - z_n(r,\xi)\|
         + \|\phi_{y_n} (r,\xi) - \psi_{z_n} (r,\xi)\|} \times \\
      & \phantom{ \le } \ \ \ \ \ \ \times
         \prts{\|y_n(r,\xi)\| + \|\phi_{y_n} (r,\xi)\|
         + \|z_n(r,\xi)\| + \| \psi_{z_n} (r,\xi)\|}^q \\
      & \le 3^q c \prts{3 \|y_n(r,\xi) - z_n(r,\xi)\| +
         \|\phi_{z_n} (r,\xi) - \psi_{z_n} (r,\xi)\|}
         \prts{\|y_n(r,\xi)\| + \|z_n(r,\xi)\|}^q\\
      & \le 3^q c \prts{3 \|y_n(r,\xi) - z_n(r,\xi)\|
         + \|\phi - \psi\|' \|z_n(r,\xi)\|}
         \prtsr{2 C \delta \pfrac{\mu(r)}{\mu(s)}^a \nu(s)^\eps \beta(s)}^q\\
      & \le 2^{q+2} 3^q c \, C^{q+1} \delta^q \pfrac{\mu(r)}{\mu(s)}^{aq+a}
         \nu(s)^{\eps(q+1)} \beta(s)^q \ \|\xi\| \cdot \|\phi - \psi\|'
   \end{align*}
   and this implies that
   \begin{align*}
      & \|y_{n+1}(t,\xi) - z_{n+1}(t,\xi)\|\\
      & \le \int_s^t \|U(t,r)\|
         \|f_{y_n,\phi}(r,\xi) - f_{z_n,\psi}(r,\xi)\| \dr\\
      & \le K \pfrac{\mu(t)}{\mu(s)}^a \mu(s)^{-aq} \nu(s)^{\eps(q+1)}
         \beta(s)^q \|\xi\| \cdot \|\phi - \psi\|'
         \int_s^t \mu(r)^{aq}\nu(r)^{\eps} \dr\\
      & \le K \pfrac{\mu(t)}{\mu(s)}^a \ \|\xi\| \cdot \|\phi - \psi\|'
   \end{align*}
   with $K = 2^{q+2} 3^q c \, C^{q+1} D \delta^q$. Choosing $\delta > 0$ such
   that $\delta^q < \dfrac{1}{2^{q+2} 3^q c \, C^q D}$, we have
      $$ \|y_{n+1}(t,\xi) - z_{n+1}(t,\xi)\|
         \le C \pfrac{\mu(t)}{\mu(s)}^a \|\xi\| \cdot \|\phi - \psi\|'.$$
   Therefore~\eqref{loc:norm:y_n-z_n} is true for every $n \in \N$ and this
   completes the proof of the lemma.
\end{proof}

\begin{lemma} \label{lemma:local:equiv}
   If $\delta > 0$ is sufficiently small and  $\phi \in \cX_{\delta,\beta}^*$,
   denoting by $x_\phi$ the unique function given by
   Lemma~\ref{lemma:local:aux1}, the following properties hold
   \begin{enumerate}[$a)$]
      \item if the identity
         \begin{equation} \label{loc:eq:phi_x}
            \phi_{x_\phi} (t,\xi)
            = V(t,s) \phi(s,\xi)
               + \int_s^t V(t,r) f_{x_\phi, \phi} (r,\xi) \dr
         \end{equation}
         holds for every $s\ge 0$, $t \ge s$ and $\xi \in
         B_{s,\delta,\beta}$, then
         \begin{equation} \label{loc:eq:phi}
            \phi(s,\xi)
            =  - \int_s^{+\infty} V(r,s)^{-1} f_{x_\phi, \phi} (r,\xi)
            \dr
         \end{equation}
         for every $s \ge 0$ and every $\xi \in B_{s,\delta,\beta}$;
      \item if~\eqref{loc:eq:phi} holds for every $s \ge 0$ and every $\xi
          \in B_{s,\delta,\beta}$, then~\eqref{loc:eq:phi_x} holds for
          every $s \ge 0$ and every $\xi \in
          B_{s,\frac{\delta}{C},\tilde{\beta}}$.
   \end{enumerate}
\end{lemma}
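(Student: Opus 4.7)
For part~(a), the plan is to apply $V(t,s)^{-1}$ to both sides of~\eqref{loc:eq:phi_x} and use the cocycle identity $V(t,s)^{-1}V(t,r)=V(r,s)^{-1}$ for $s\le r\le t$, which follows from $T(t,s)=T(t,r)T(r,s)$ together with $Q(t)T(t,r)=T(t,r)Q(r)$. This rewrites~\eqref{loc:eq:phi_x} as
$$V(t,s)^{-1}\phi_{x_\phi}(t,\xi)=\phi(s,\xi)+\int_s^t V(r,s)^{-1}f_{x_\phi,\phi}(r,\xi)\dr,$$
and letting $t\to+\infty$ will yield~\eqref{loc:eq:phi}. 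Two estimates are required. First, the left-hand side tends to zero: combining~\eqref{eq:dich-2} with the bound $\|\phi_{x_\phi}(t,\xi)\|\le 2\|x_\phi(t,\xi)\|$ coming from~\eqref{loc:cond-phi-2-2} and with~\eqref{loc:cond-x-2}, one obtains a bound carrying the factor $\mu(t)^{a-b}\nu(t)^{\eps}$, which vanishes by~\eqref{eq:CondicaoTeo}. Second, the integrand is integrable on $[s,+\infty)$: using~\eqref{loc:cond-f-3},~\eqref{loc:cond-x-2} and~\eqref{eq:dich-2}, it is controlled by a constant times $(\mu(r)/\mu(s))^{a(q+1)-b}\nu(r)^{\eps}$, which, since $a-b\le 0$ and $\mu(r)\ge 1$, is dominated by $(\mu(r)/\mu(s))^{aq}\nu(r)^{\eps}$, integrable by~\eqref{loc-int-conv}.

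For part~(b), I run the argument in reverse. Splitting the improper integral in~\eqref{loc:eq:phi} at $t$, applying $V(t,s)$, and using the cocycle relations $V(t,s)V(r,s)^{-1}=V(t,r)$ for $s\le r\le t$ and $V(t,s)V(r,s)^{-1}=V(r,t)^{-1}$ for $r\ge t$, the identity becomes
$$V(t,s)\phi(s,\xi)+\int_s^t V(t,r)f_{x_\phi,\phi}(r,\xi)\dr=-\int_t^{+\infty}V(r,t)^{-1}f_{x_\phi,\phi}(r,\xi)\dr.$$
The plan is to identify the right-hand side with $\phi(t,x_\phi(t,\xi))$ by invoking~\eqref{loc:eq:phi} at initial time $t$ with initial datum $x_\phi(t,\xi)$. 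Two ingredients make this legitimate. First, $x_\phi(t,\xi)$ must lie in $B_{t,\delta,\beta}$: from~\eqref{loc:cond-x-2} and the assumption $\|\xi\|<(\delta/C)\tilde\beta(s)=(\delta/C)\beta(s)\nu(s)^{-\eps}$ one gets $\|x_\phi(t,\xi)\|\le\delta(\mu(t)/\mu(s))^{a}\beta(s)$, and the monotonicity of $\mu(t)^a\beta(t)^{-1}$ from~\eqref{loc:eq:decreasing} upgrades this to $\|x_\phi(t,\xi)\|\le\delta\beta(t)$; this is precisely why the domain is shrunk to the $\tilde\beta$-ball in the statement. Second, one needs the cocycle property $x^{(t)}_\phi(r,x_\phi(t,\xi))=x_\phi(r,\xi)$ for $r\ge t$, where $x^{(t)}_\phi$ is the function furnished by Lemma~\ref{lemma:local:aux1} at initial time $t$. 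This is obtained by splitting the integral in~\eqref{loc:eq:x=U+int} at $t$ and using $U(r,s)=U(r,t)U(t,s)$ to see that $r\mapsto x_\phi(r,\xi)$ satisfies on $[t,+\infty)$ the same integral equation as $x^{(t)}_\phi(\cdot,x_\phi(t,\xi))$; uniqueness then forces agreement, yielding $f_{x_\phi,\phi}(r,\xi)=f_{x^{(t)}_\phi,\phi}(r,x_\phi(t,\xi))$ on $[t,+\infty)$, and~\eqref{loc:eq:phi} applied at $(t,x_\phi(t,\xi))$ closes the argument.

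The main obstacle is this last cocycle step in part~(b): the uniqueness in Lemma~\ref{lemma:local:aux1} is formulated for the whole family of initial conditions in $\cB_{s,\delta,\beta}$, whereas here one only needs agreement along a single trajectory. The cleanest way to secure this is to repeat the Banach contraction argument of Lemma~\ref{lemma:local:aux1} in a space of single-initial-condition functions $r\mapsto z(r)\in E(r)$ with $z(t)=x_\phi(t,\xi)$, leaning on the already-established containment $x_\phi(t,\xi)\in B_{t,\delta,\beta}$ to ensure that the contraction estimates close at starting time $t$ for the same $\delta$. Everything else is routine once the balance between the dichotomy rates and conditions~\eqref{eq:CondicaoTeo} and~\eqref{loc-int-conv} is recognised.
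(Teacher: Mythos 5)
Your proposal is correct and follows essentially the same two-step route as the paper: in part~(a), apply $V(t,s)^{-1}$, establish integrability of the tail via~\eqref{loc:cond-f-3}, \eqref{loc:cond-x-2} and~\eqref{loc-int-conv} together with $a-b\le0$, and kill the boundary term using~\eqref{eq:dich-2}, \eqref{loc:cond-x-2} and~\eqref{eq:CondicaoTeo}; in part~(b), split the improper integral at $t$, apply $V(t,s)$, verify $x_\phi(t,\xi)\in B_{t,\delta,\beta}$ via~\eqref{loc:eq:decreasing}, and identify the remaining tail with $\phi(t,x_\phi(t,\xi))$ by evaluating~\eqref{loc:eq:phi} at initial time $t$. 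The one place where you add something is the flow-property step $x_\phi^{(t)}(r,x_\phi(t,\xi))=x_\phi(r,\xi)$: the paper simply declares $F_r(s,\xi)=(s+r,x_\phi(s+r,\xi))$ to be a semiflow and then uses $F_{r-t}\circ F_{t-s}=F_{r-s}$ without justification, whereas you correctly flag that this requires a uniqueness argument along a single trajectory, since the uniqueness assertion of Lemma~\ref{lemma:local:aux1} concerns the whole space $\cB_{t,\delta,\beta}$ rather than individual curves. Your proposed remedy — re-run the contraction on the space of single-initial-condition curves through $x_\phi(t,\xi)$, after verifying the containment $x_\phi(t,\xi)\in B_{t,\delta,\beta}$ — is sound: indeed the curve $r\mapsto x_\phi(r,\xi)$ on $[t,\infty)$ satisfies the requisite a priori bound $\|x_\phi(r,\xi)\|\le\delta\,(\mu(r)/\mu(t))^a\beta(t)$, again by~\eqref{loc:eq:decreasing}, so both candidate trajectories live in the ball where the contraction estimate applies, and equality follows. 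This makes your write-up a bit more rigorous than the paper's at that single point; everything else coincides.
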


\begin{proof}
   First we prove that the integral in~\eqref{loc:eq:phi} is convergent.
   From~\eqref{loc:cond-f-3} and~\eqref{loc:cond-x-2} we have
   \begin{align*}
      \|f_{x_\phi,\phi}(r,\xi)\|
      & \le c \prts{\|x_\phi(r,\xi)\| + \|\phi_{x_\phi}(r,\xi)\|}^{q+1}\\
      & \le 3^{q+1} c \, \|x_\phi(r,\xi)\|^{q+1}\\
      & \le 3^{q+1} c \, C^{q+1} \delta^{q+1}
         \pfrac{\mu(r)}{\mu(s)}^{aq+a} \nu(s)^{\eps(q+1)} \beta(s)^{q+1}
   \end{align*}
   and, since $a-b<0$, this implies that
   \begin{align*}
      & \int_s^{+\infty} \|V(r,s)^{-1}\| \, \|f_{x_\phi, \phi} (r,\xi)\| \dr\\
      & \le 3^{q+1} c \, C^{q+1} D \delta^{q+1}
         \mu(s)^{-aq-a+b}\nu(s)^{\eps(q+1)}\beta(s)^{q+1}
         \int_s^{+\infty} \mu(r)^{aq+a-b}\nu(r)^{\eps} \dr\\
      & \le 3^{q+1} c \, C^{q+1} D \delta^{q+1}
         \mu(s)^{-aq} \nu(s)^{\eps(q+1)}\beta(s)^{q+1}
         \int_s^{+\infty} \mu(r)^{aq}\nu(r)^{\eps} \dr
   \end{align*}
   and, by~\eqref{loc-int-conv}, the integral is convergent.

   If~\eqref{loc:eq:phi_x} is true for every $s\ge 0$, $t \ge s$ and $\xi \in
   B_{s,\delta,\beta}$ then
   \begin{equation} \label{loc:eq:phi-aux1}
      \phi(s,\xi)
      = V(t,s)^{-1} \phi_{x_\phi} (t,\xi)- \int_s^t V(r,s)^{-1}
         f_{x_\phi, \phi} (r,\xi) \dr.
   \end{equation}
   From~\eqref{eq:dich-2},~\eqref{loc:cond-x-2} and~\eqref{eq:CondicaoTeo} we
   have
      $$ \lim_{t \to +\infty} \|V(t,s)^{-1} \phi_{x_\phi} (t,\xi)\|
         \le \lim_{t \to +\infty} D \pfrac{\mu(t)}{\mu(s)}^{-b} \nu(t)^\eps
            2 C \pfrac{\mu(t)}{\mu(s)}^a \nu(s)^\eps \|\xi\|
         =0.$$
   Thus, letting $t \to + \infty$ in~\eqref{loc:eq:phi-aux1}, we can conclude
   that~\eqref{loc:eq:phi} holds for every $s \ge 0$ and every $\xi \in
   B_{s,\delta,\beta}$. Therefore $a)$ holds.

   Suppose now that~\eqref{loc:eq:phi} holds for every $s \ge 0$ and every $\xi
   \in B_{s,\delta,\beta}$. Defining, for each $(s,\xi) \in G_{\delta,\beta}$,
   a semiflow by
      $$ F_r(s,\xi)=(s+r, x_\phi(s+r,\xi)), \ r \ge 0$$
   from~\eqref{loc:eq:phi}, we have
   \begin{equation} \label{loc:eq:phi-aux2}
      \phi(s,\xi)
      = - \int_s^{+\infty} V(r,s)^{-1}
         f(F_{r-s}(s,\xi),\phi(F_{r-s}(s,\xi)))\dr.
   \end{equation}
   If $\xi \in B_{s,\delta / C,\tilde{\beta}}$, then by
   \eqref{loc:eq:decreasing} we get
      $$ \|x_\phi(t,\xi)\|
         \le C \pfrac{\mu(t)}{\mu(s)}^a \nu(s)^\eps \dfrac{\delta}{C} \tilde\beta(s)
         = \delta \dfrac{\mu(t)^a \beta(t)^{-1}}{\mu(s)^a \beta(s)^{-1}} \beta(t)
         \le \delta \beta(t)$$
   and this means that $x_\phi(t,\xi) \in B_{t,\delta,\beta}$. Moreover, since
      $$ F_{r-t}(t,x_\phi(t,\xi))
         = F_{r-t}(F_{t-s}(s,\xi))
         = F_{r-s}(s,\xi)
         = (r,x_\phi(r,\xi)),$$
   replacing $(s,\xi)$ by $(t,x_\phi(t,\xi))$ in~\eqref{loc:eq:phi-aux2} we
   have
   \begin{align*}
      \phi(t,x_\phi(t,\xi))
      & = - \int_t^{+\infty} V(r,t)^{-1}
         f(F_{r-t}(t,x_\phi(t,\xi)),\phi(F_{r-t}(t,x_\phi(t,\xi))))\dr\\
      & = - \int_t^{+\infty} V(r,t)^{-1}
         f(r,x_\phi(r,\xi),\phi(r,x_\phi(r,\xi)))\dr\\
      & = - \int_t^{+\infty} V(r,t)^{-1} f_{x_\phi,\phi}(r,\xi)\dr.
   \end{align*}
   Then, using the fact that $V(t,s) V(r,s)^{-1} = V(t,r)$, we have
   by~\eqref{loc:eq:phi}
      $$ V(t,s) \phi(s,\xi)
         = - \int_s^{+\infty} V(t,s) V(r,s)^{-1} f_{x,\phi} (r,\xi) \dr$$
   and this is equivalent to
   \begin{align*}
      V(t,s) \phi(s,\xi) + \int_s^t V(t,r) f_{x,\phi}(r,\xi)\dr
      & = -\int_t^{+\infty} V(r,t)^{-1} f_{x,\phi}(r,\xi)\dr\\
      & = \phi(t,x_\phi(t,\xi)).
   \end{align*}
  This finishes the proof of the lemma.
\end{proof}

\begin{lemma} \label{lemma:local:aux2}
   Choosing $\delta > 0$ sufficiently small, there is a unique $\phi \in
   \cX^*_{\delta,\beta}$ such that~\eqref{loc:eq:phi} holds for every $s \ge 0$
   and every $\xi \in E(s)$.
\end{lemma}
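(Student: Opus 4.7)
The plan is to obtain $\phi$ as the unique fixed point, in the complete metric space $(\cX^*_{\delta,\beta},\|\cdot\|')$, of the operator $\Phi$ defined on $G_{\delta,\beta}$ by
\[
   (\Phi\phi)(s,\xi)
   = -\int_s^{+\infty} V(r,s)^{-1} f_{x_\phi,\phi}(r,\xi) \dr,
\]
where $x_\phi$ is the function delivered by Lemma~\ref{lemma:local:aux1}, and then extended to $G$ via the scaling rule built into the definition of $\cX^*_{\delta,\beta}$. Once such a $\phi$ exists, part~$b)$ of Lemma~\ref{lemma:local:equiv} upgrades \eqref{loc:eq:phi} to \eqref{loc:eq:phi_x}, which is exactly the semiflow invariance required by Theorem~\ref{thm:local}. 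Convergence of the improper integral, and the estimate needed to apply part~$b)$, have already been established at the start of the proof of Lemma~\ref{lemma:local:equiv}.

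The first step is to check that $\Phi$ sends $\cX^*_{\delta,\beta}$ into itself. Properties \eqref{loc:cond-phi-1} and \eqref{loc:cond-phi-0} are immediate from $V(r,s)^{-1}\colon F(r)\to F(s)$ together with \eqref{loc:cond-x-0}, \eqref{loc:cond-phi-0}, \eqref{loc:cond-f-0}. The delicate requirement is the Lipschitz bound \eqref{loc:cond-phi-2} with constant exactly $1$ on $G_{\delta,\beta}$. I would reproduce the calculation leading to \eqref{loc:eq:norm:der:f-f} to get
\[
   \|f_{x_\phi,\phi}(r,\xi) - f_{x_\phi,\phi}(r,\bar\xi)\|
   \le K_1 \delta^q \pfrac{\mu(r)}{\mu(s)}^{aq+a}
         \nu(s)^{\eps(q+1)} \beta(s)^q \|\xi - \bar\xi\|,
\]
then integrate against $\|V(r,s)^{-1}\| \le D(\mu(r)/\mu(s))^{-b}\nu(r)^\eps$. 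Using $a-b\le 0$ and $\mu(r)\ge \mu(s)$ to absorb the factor $(\mu(r)/\mu(s))^{a-b}\le 1$, and invoking the normalization \eqref{loc:equa:fund}, the integral collapses to $K_2 \delta^q \|\xi-\bar\xi\|$, which we make at most $\|\xi-\bar\xi\|$ by shrinking $\delta$.

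Next I would verify that $\Phi$ is a contraction. For $\phi,\psi\in\cX^*_{\delta,\beta}$ with associated $x_\phi, x_\psi$, the difference $\|f_{x_\phi,\phi}(r,\xi) - f_{x_\psi,\psi}(r,\xi)\|$ is estimated along the lines of the $y_n,z_n$ computation at the end of Lemma~\ref{lemma:local:aux1}: split the inner variation via \eqref{loc:eq:norm_x_phi-x_psi} and \eqref{loc:cond-phi-2-3}, bound the $q$-th power factor with \eqref{loc:cond-x-2} and \eqref{loc:cond-phi-3}, to obtain
\[
   \|f_{x_\phi,\phi}(r,\xi) - f_{x_\psi,\psi}(r,\xi)\|
   \le K_3 \delta^q \pfrac{\mu(r)}{\mu(s)}^{aq+a}
         \nu(s)^{\eps(q+1)} \beta(s)^q \|\xi\| \cdot \|\phi-\psi\|'.
\]
Integrating against $\|V(r,s)^{-1}\|$ and using \eqref{loc:equa:fund} once more gives $\|\Phi\phi - \Phi\psi\|' \le K_4 \delta^q \|\phi-\psi\|'$, and a further shrinkage of $\delta$ produces the contraction.

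The main obstacle is that both requirements must hold simultaneously: the self-map property demands a \emph{pointwise} Lipschitz constant of $1$ (not merely finite), while the contraction property lives in the metric $\|\cdot\|'$. Both succeed through the same mechanism, namely that the definition of $\beta$ in \eqref{def:beta} turns \eqref{loc:equa:fund} into a literal identity, so every $\delta^q$ produced by the quadratic-like nonlinearity converts, after being integrated against the dichotomy bound and $\nu(r)^\eps$, into a plain constant; the extra factor $(\mu(r)/\mu(s))^{-b}$ from $\|V(r,s)^{-1}\|$ is absorbed using $a-b\le 0$ without spoiling integrability, which is already guaranteed by \eqref{loc-int-conv}. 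A single sufficiently small $\delta>0$ then secures both estimates at once, and the Banach fixed-point theorem produces the required unique $\phi\in\cX^*_{\delta,\beta}$.
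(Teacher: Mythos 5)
Your proposal is correct and follows essentially the same route as the paper: define the operator $\Phi$ on $\cX^*_{\delta,\beta}$ via the same integral formula, verify the self-map property (including the unit Lipschitz constant) using \eqref{loc:eq:norm:der:f-f}, the absorption $(\mu(r)/\mu(s))^{a-b}\le 1$, and the normalization identity \eqref{loc:equa:fund}, establish the contraction estimate in $\|\cdot\|'$ by the same splitting via \eqref{loc:eq:norm_x_phi-x_psi} and \eqref{loc:cond-phi-2-3}, and conclude with the Banach fixed-point theorem after a single further shrinkage of $\delta$.
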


\begin{proof}
   Define in $\cX^*_{\delta,\beta}$ the operator $\Phi$ by
      $$ \prts{\Phi \phi}(s,\xi)
         = - \int_s^{+\infty} V(r,s)^{-1} f_{x_\phi, \phi} (r,\xi) \dr$$
   for every $(s,\xi) \in G_{\delta,\beta}$ (and this can be extended uniquely
   by continuity to the closure of $G_{\delta,\beta}$) and by
      $$ \prts{\Phi \phi}(s,\xi)
         = \prts{\Phi \phi} \prts{s, \dfrac{\delta \beta(s) \xi}{\|\xi\|}}.$$
   for every $(s,\xi) \not\in G_{\delta,\beta}$. It follows immediately from
   the
   definition of $\Phi$ that $\Phi \phi$ satisfies~\eqref{loc:cond-phi-1}.
   Furthermore, from~\eqref{loc:cond-x-0},~\eqref{loc:cond-phi-0}
   and~\eqref{loc:cond-f-0}, we have $\prts{\Phi \phi}(s,0) = 0$ for every $s
   \ge 0$.

   Given $(s,\xi), (s,\bar\xi) \in G_{\delta,\beta}$, from~\eqref{eq:dich-2},
   \eqref{loc:eq:norm:der:f-f} and~\eqref{loc:equa:fund}, we have
   \begin{align*}
      & \|(\Phi \phi)(s,\xi) - (\Phi \phi)(s,\bar\xi)\|\\
      & \le \int_s^{+\infty} \|V(r,s)^{-1}\| \cdot
         \|f_{x_\phi, \phi} (r,\xi) - f_{x_\phi, \phi} (r,\bar\xi)\| \dr\\
      & \le 2^q 3^{q+1} c \, C^{q+1} D \delta^q
         \mu(s)^{- aq - a + b} \nu(s)^{\eps(q+1)}\beta(s)^q \|\xi - \bar\xi\|
         \int_s^{+\infty} \mu(r)^{aq + a - b} \nu(r)^\eps \dr\\
      & \le 2^q 3^{q+1} c \, C^{q+1} D \delta^q
         \mu(s)^{- aq} \nu(s)^{\eps(q+1)}\beta(s)^q \|\xi - \bar\xi\|
         \int_s^{+\infty} \mu(r)^{aq} \nu(r)^\eps \dr\\
      & = 2^q 3^{q+1} c \, C^{q+1} D \delta^q \|\xi - \bar\xi\|.
   \end{align*}
   Choosing $\delta$ sufficiently small we get
      $$ \|(\Phi \phi)(s,\xi) - (\Phi \phi)(s,\bar\xi)\|
         \le \|\xi-\bar\xi \|.$$
   Therefore $\Phi\prts{\cX^*_{\delta,\beta}} \subseteq \cX^*_{\delta,\beta}$.

   Now we prove that $\Phi$ is a contraction in $\cX^*_{\delta,\beta}$ with the
   metric given by~\eqref{metric:local:X*}. Let $\phi, \psi \in
   \cX^*_{\delta,\beta}$ and $s \ge 0$. Then, since for every $r \ge s$ and
   every $\xi \in B_{s,\delta,\beta}$ we have
   \begin{align*}
      & \|f_{x_\phi,\phi}(r,\xi) - f_{x_\psi,\psi}(r,\xi)\|\\
      & \le c \prts{\|x_\phi(r,\xi) - x_\psi(r,\xi)\|
         + \|\phi_{x_\phi}(r,\xi) - \psi_{x_\psi}(r,\xi)\|} \times\\
      & \hspace*{1cm} \times \prts{\|x_\phi(r,\xi)\| + \|x_\psi(r,\xi)\|
         + \|\phi_{x_\phi}(r,\xi)\| + \|\psi_{x_\psi}(r,\xi)\|}^q\\
      & \le 3^q c \prts{3 \|x_\phi(r,\xi) - x_\psi(r,\xi)\|
         + \|\phi_{x_\phi}(r,\xi) - \psi_{x_\phi}(r,\xi)\|}
         \prts{\|x_\phi(r,\xi)\| + \|x_\psi(r,\xi)\|}^q\\
      & \le 3^q c \prts{3 \|x_\phi(r,\xi) - x_\psi(r,\xi)\|
         + \|\phi- \psi\|' \|x_\phi(r,\xi)\|}
            \prtsr{2 C \delta \pfrac{\mu(r)}{\mu(s)}^a \nu(s)^{\eps}
            \beta(s)}^q
   \end{align*}
   it follows from~\eqref{loc:eq:norm_x_phi-x_psi} that
   \begin{align*}
      \|f_{x_\phi,\phi}(r,\xi) & - f_{x_\psi,\psi}(r,\xi)\|\\
      & \le 2^{q+2} 3^q c \, C^{q+1} \delta^q
         \pfrac{\mu(r)}{\mu(s)}^{aq+a} \nu(s)^{\eps(q+1)}\beta(s)^q
         \ \|\xi\| \cdot \|\phi - \psi\|'
   \end{align*}
   for every $r \ge s$ and every $\xi \in B_{s,\delta,\beta}$.
   From~\eqref{eq:dich-2}, \eqref{eq:CondicaoTeo}
   and~\eqref{loc-int-conv} we have
   \begin{align*}
      & \|(\Phi \phi)(s,\xi) - (\Phi \psi) (s,\xi)\|\\
      & \le \int_s^{+\infty} \|V(r,s)^{-1}\| \cdot
         \|f_{x_\phi,\phi}(r,\xi) - f_{x_\psi,\psi}(r,\xi)\| \dr\\
      & \le K \mu(s)^{-aq-a+b} \nu(s)^{\eps(q+1)} \beta(s)^q
         \|\xi\| \cdot \|\phi - \psi\|'
         \int_s^{+\infty} \mu(r)^{aq+a-b} \nu(r)^\eps \dr\\
      & \le K \mu(s)^{-aq} \nu(s)^{\eps(q+1)} \beta(s)^q
         \|\xi\| \cdot \|\phi - \psi\|'
         \int_s^{+\infty} \mu(r)^{aq} \nu(r)^\eps \dr\\
      & =  K \|\xi\| \cdot \|\phi - \psi\|'
   \end{align*}
   for every $s \ge 0$ and every $\xi \in B_{s,\delta,\beta}$ and with $K =
   2^{q+2} 3^q c \, C^{q+1} D \delta^q$. Therefore, for every $\phi, \psi \in
   \cX^*_{\delta,\beta}$, we obtain
      $$ \|(\Phi \phi) - (\Phi \psi) \|'
         \le  2^{q+2} 3^q c \, C^{q+1} D \delta^q \ \|\phi - \psi\|'$$
    and choosing $\delta > 0$ sufficiently small, $\Phi$ is a contraction on
    $\cX^*_{\delta,\beta}$.

   By the Banach fixed point theorem, $\Phi$ has a unique fixed point and this
   fixed point verifies~\eqref{loc:eq:phi} for every $s \ge 0$ and every $\xi
   \in E(s)$.
\end{proof}

Now the proof of Theorem~\ref{thm:local} follows easily.

\begin{proof}[Proof of Theorem~\ref{thm:local}]
   For each $\phi \in \cX^*_{\delta,\beta}$, using
   Lemma~\ref{lemma:local:aux1}, there is a unique function $x_\phi$ in
   $\cB_{s,\delta,\beta}$ satisfying~\eqref{eq:split-3a}. By
   Lemma~\ref{lemma:local:equiv} solving~\eqref{eq:split-3b} is equivalent to
   solve~\eqref{loc:eq:phi} and from Lemma~\ref{lemma:local:aux2} there is a
   unique solution of~\eqref{loc:eq:phi}. Hence, choosing $\delta>0$
   sufficiently small, the existence of a stable manifold is established.

   Moreover, for every $s \ge 0$, every $t \ge s$  and every $\xi,\bar\xi \in
   B_{s,\frac{\delta}{C},\tilde{\beta}}$, it follows
   from~\eqref{loc:cond-phi-2}
   and~\eqref{loc:cond-x-1} that
   \begin{align*}
      \| \Psi_{t-s}(p_{s,\xi}) - \Psi_{t-s} (p_{s,\bar\xi})\|
      & = \|\prts{t, x_\phi(t,\xi), \phi_{x_\phi} (t,\xi)}
         - \prts{t, x_\phi(t,\bar\xi), \phi_{x_\phi} (t,\bar\xi)}\|\\
      & \le \|x_\phi(t,\xi) - x_\phi(t,\bar\xi)\|
         + \|\phi_{x_\phi}(t,\xi) - \phi_{x_\phi}(t,\bar\xi)\| \\
      & \le 2 \|x_\phi(t,\xi) - x_\phi(t,\bar\xi)\| \\
      & \le 2 C \pfrac{\mu(t)}{\mu(s)}^a \nu(s)^\eps
               \|\xi - \bar\xi\|
   \end{align*}
   and the theorem is proved.
\end{proof}
\section{Behavior under perturbations}\label{section:perturbations}
In this section we assume that equation~\eqref{eq:ivp-li} admits a
$(\mu,\nu)$-dichotomy for some $D \ge 1$, $a <  0\le b$ and $\eps > 0$. Given
$c > 0$ and $q > 0$, let $\cP_{c,q}$ be the class of all perturbations $f
\colon [0, +\infty[ \times X \to X$ that verify conditions~\eqref{loc:cond-f-0}
and~\eqref{loc:cond-f-2} with the given $c$ and $q$. In $\cP_{c,q}$ we can
define a metric by
\begin{equation}\label{metric:P_c,q}
	\|f-\bar f\|'
   =\sup\set{\dfrac{\|f(t,u)-\bar f(t,u)\|}{\|u\|^{q+1}} \colon
      t \ge 0, u \in X \setm{0}},
\end{equation}
for every $f, \bar f \in \cP_{c,q}$.

The purpose of this section is to see how the manifolds in
Theorem~\ref{thm:local} vary with the perturbations. To do this we consider two
perturbation $f, \bar f \in \cP_{c,q}$ and the functions $\phi$ and $\bar\phi$
given by Theorem~\ref{thm:local} when we perturbe equation~\eqref{eq:ivp-li}
with $f$ and $\bar f$, respectively, and we compare the distance between $\phi$
and $\bar\phi$ in the metric given by~\eqref{metric:local:X} with the distance
between $f$ and $\bar f$ in the metric given by~\eqref{metric:P_c,q}.

\begin{theorem}
    Let $c > 0$ and $q > 0$. Suppose that equation~\eqref{eq:ivp-li} admits a
    $(\mu,\nu)$-dichotomy for some $D \ge 1$, $a <  0\le b$ and $\eps > 0$ and
    that the hypothesis of Theorem~\ref{thm:local} are satisfied. Then,
    choosing $\delta>0$ sufficiently small, there exists $K >0$ such that
		$$ \|\phi-\bar\phi\|' \le K \|f-\bar f\|'$$
   for every $f, \bar f \in \cP_{c,q}$, where $\phi, \bar \phi \in
   \cX_{\delta,\beta}$ are the functions given by Theorem~\ref{thm:local},
   corresponding to the perturbations $f$ and $\bar f$, respectively.
\end{theorem}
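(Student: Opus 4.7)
My plan is to view $\phi$ and $\bar\phi$ as the fixed points of two contractions $\Phi_f, \Phi_{\bar f}$ defined on the same complete metric space $\cX^*_{\delta,\beta}$, built exactly as in Lemma~\ref{lemma:local:aux2} but with the respective perturbations. The standard Banach fixed-point perturbation inequality
$$\|\phi-\bar\phi\|' = \|\Phi_f\phi - \Phi_{\bar f}\bar\phi\|' \le \|\Phi_f\phi-\Phi_f\bar\phi\|' + \|\Phi_f\bar\phi-\Phi_{\bar f}\bar\phi\|' \le \theta\|\phi-\bar\phi\|' + \|\Phi_f\bar\phi-\Phi_{\bar f}\bar\phi\|',$$
with $\theta = 2^{q+2}3^q c\,C^{q+1}D\delta^q < 1$ (shrink $\delta$ if necessary, per Lemma~\ref{lemma:local:aux2}), will reduce the whole problem to proving an estimate $\|\Phi_f\bar\phi-\Phi_{\bar f}\bar\phi\|' \le K_2\|f-\bar f\|'$, after which the conclusion follows with $K = K_2/(1-\theta)$.

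Before I can attack that inequality, I need to understand how the auxiliary function $x_\phi$ from Lemma~\ref{lemma:local:aux1} depends on the perturbation. My first step is therefore to prove a variant of that lemma comparing $x_{f,\phi}$ and $x_{\bar f,\phi}$ for the \emph{same} $\phi \in \cX^*_{\delta,\beta}$. I run the two iterations $y_{n+1}=J_{f,\phi}y_n$ and $z_{n+1}=J_{\bar f,\phi}z_n$ starting from $y_1=z_1=U(\cdot,s)\xi$ and compare them step by step, as in~\eqref{loc:norm:y_n-z_n}. At each step the difference of integrands splits, after adding and subtracting $f(r,z_n,\phi_{z_n})$, into a Lipschitz-in-$x$ part (bounded via~\eqref{loc:cond-f-2},~\eqref{loc:cond-phi-2-2} and~\eqref{loc:cond-x-2}) and a direct part bounded by $\|f-\bar f\|'(\|z_n\|+\|\phi_{z_n}\|)^{q+1}\le 2^{q+1}\|f-\bar f\|'\|z_n\|^{q+1}$ using~\eqref{metric:P_c,q} and~\eqref{loc:cond-phi-3}. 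Integrating against $\|U(t,r)\|$, invoking~\eqref{eq:dich-1}, and applying the fundamental identity~\eqref{loc:equa:fund} produces a factor $(\mu(t)/\mu(s))^a$; the Lipschitz-in-$x$ part is absorbed for $\delta$ small, and induction yields
$$\|x_{f,\phi}(t,\xi) - x_{\bar f,\phi}(t,\xi)\| \le K_1 \pfrac{\mu(t)}{\mu(s)}^a \nu(s)^\eps \|\xi\|\cdot\|f-\bar f\|'.$$

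With that Lipschitz estimate in hand, the desired bound on $\|\Phi_f\bar\phi - \Phi_{\bar f}\bar\phi\|'$ becomes a rerun of the contraction calculation of Lemma~\ref{lemma:local:aux2}. I write
$$(\Phi_f\bar\phi)(s,\xi) - (\Phi_{\bar f}\bar\phi)(s,\xi) = -\int_s^{+\infty} V(r,s)^{-1}\bigl[f(r,x_{f,\bar\phi},\bar\phi_{x_{f,\bar\phi}}) - \bar f(r,x_{\bar f,\bar\phi},\bar\phi_{x_{\bar f,\bar\phi}})\bigr]\dr,$$
and split the bracket by adding and subtracting $f(r,x_{\bar f,\bar\phi},\bar\phi_{x_{\bar f,\bar\phi}})$. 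The first piece is controlled by~\eqref{loc:cond-f-2},~\eqref{loc:cond-phi-2-2} and~\eqref{loc:cond-x-2} combined with the previous step, producing a factor $\|f-\bar f\|'$; the second piece is bounded directly from~\eqref{metric:P_c,q} by $2^{q+1}\|f-\bar f\|'\|x_{\bar f,\bar\phi}\|^{q+1}$. Integrating against $\|V(r,s)^{-1}\|\le D(\mu(r)/\mu(s))^{-b}\nu(r)^\eps$ and using $\mu(r)^{aq+a-b}\le \mu(r)^{aq}$ (since $a-b\le 0$) together with~\eqref{loc:equa:fund} delivers the required bound $\|\Phi_f\bar\phi-\Phi_{\bar f}\bar\phi\|'\le K_2\|f-\bar f\|'$.

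The main obstacle will be the bookkeeping in the first step: I must ensure that the induction is uniform in $n$, so that the $\delta^q$ factor absorbing the Lipschitz-in-$x$ term does not spoil the factor $\|f-\bar f\|'$ being propagated from step to step. Once that Lipschitz dependence of $x_{f,\phi}$ on $f$ is secured, the remaining two steps are direct adaptations of the proofs already presented in Lemmas~\ref{lemma:local:aux1} and~\ref{lemma:local:aux2}, with $\|f-\bar f\|'$ playing the role that $\|\phi-\psi\|'$ played there.
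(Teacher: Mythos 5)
Your proposal is correct, and it takes a genuinely different route from the paper's. The paper never invokes the Banach fixed-point perturbation lemma: instead it starts from the integral representation~\eqref{loc:eq:phi} for both $\phi$ and $\bar\phi$, compares the ``diagonal'' auxiliary functions $x_\phi$ (built from $f$) and $x_{\bar\phi}$ (built from $\bar f$) directly from~\eqref{eq:split-3a}, and obtains a coupled system of two inequalities in the three quantities $\|f-\bar f\|'$, $\|x_\phi-x_{\bar\phi}\|'$, $\|\phi-\bar\phi\|'$, closed by choosing $\delta$ small enough to absorb the self-references twice. You instead decouple the problem with the standard estimate $\|\phi-\bar\phi\|'\le (1-\theta)^{-1}\|\Phi_f\bar\phi-\Phi_{\bar f}\bar\phi\|'$, which forces you to compare $x_{f,\bar\phi}$ and $x_{\bar f,\bar\phi}$ for the \emph{same} $\bar\phi$ rather than the diagonal pair; you then handle that by an inductive Picard-iterate comparison as in~\eqref{loc:norm:y_n-z_n}. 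Your version is more modular and reuses the contraction constant $\theta$ already computed in Lemma~\ref{lemma:local:aux2}, at the cost of an extra auxiliary lemma on the $f$-dependence of $x_\phi$; the paper's version is shorter because it reuses the $x_\phi$ already in hand and never isolates an $f$-only perturbation of $\Phi$. Two small remarks on your write-up: (i) your $x$-comparison could be obtained more directly, as the paper does, by subtracting the two integral equations and absorbing via the metric~\eqref{metric_B_s,delta,beta} rather than by iterating; (ii) the bound ``$\mu(r)^{aq+a-b}\le\mu(r)^{aq}$'' must be used jointly with the prefactor $\mu(s)^{-aq-a+b}$, i.e.\ as $(\mu(r)/\mu(s))^{a-b}\le 1$ for $r\ge s$, since $\mu(s)^{-aq-a+b}\ge\mu(s)^{-aq}$ goes the wrong way on its own; the combined inequality is what the paper uses and what makes~\eqref{loc:equa:fund} applicable.
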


\begin{proof}
   Let $(s,\xi) \in G_{\delta,\beta}$. From~\eqref{loc:eq:phi} we obtain
	\begin{equation} \label{loc:eq:MajP3}
   	\|\phi(s,\xi)-\bar \phi(s,\xi)\|
      \le \int_s^{+\infty} \|V(r,s)^{-1}\| \cdot \|f_{x_\phi,\phi}(r,\xi)-\bar
            f_{x_{\bar \phi},\bar \phi}(r,\xi)\| \dr,
	\end{equation}
   where $x_\phi, x_{\bar\phi} \in \cB_{s,\delta,\beta}$ are the functions
   given by Lemma~\ref{lemma:local:aux1} associated with $(f,\phi)$ and $(\bar
   f, \bar \phi)$, respectively.
   By \eqref{metric:P_c,q}, \eqref{loc:cond-phi-2-2}, \eqref{loc:cond-f-2},
   \eqref{loc:cond-phi-2-3}, \eqref{loc:cond-x-2}
   and~\eqref{metric_B_s,delta,beta} we have for $r \ge s$
   \begin{equation} \label{norm_f_x,phi_-_bar-f_bar_x,bar-phi}
      \begin{split}
   		& \|f_{x_\phi,\phi}(r,\xi)
            - \bar f_{x_{\bar \phi},\bar \phi}(r,\xi)\|\\
   		& \le \|f_{x_\phi,\phi}(r,\xi)-\bar f_{x_\phi,\phi}(r,\xi)\|
            + \|\bar f_{x_\phi,\phi}(r,\xi) - \bar f_{x_{\bar \phi},
            \bar \phi}(r,\xi)\|\\
   		& \le 3^{q+1} \|f-\bar f\|'  \|x_\phi(r,\xi)\|^{q+1}
            + 3^{q+1}c  \|x_\phi(r,\xi)-x_{\bar\phi}(r,\xi)\|
            (\|x_\phi(r,\xi)\|+\|x_{\bar\phi}(r,\xi)\|)^q \\
   		& \quad + 3^q c \|\phi-\bar \phi\|' \|x_{\bar \phi}(r,\xi)\|
            \prts{\|x_\phi(r,\xi)\|+\|x_{\bar \phi}(r,\xi)\|}^q \\
   		& \le 3^{q+1} C^{q+1} \delta^q \|f-\bar f\|' \|\xi\|
            \pfrac{\mu(r)}{\mu(s)}^{aq+a}
            \nu(s)^{\eps(q+1)} \beta(s)^q\\
   		& \quad + 2^q 3^{q+1}c C^q \delta^q \|x_\phi-x_{\bar\phi}\|' \|\xi\|
            \pfrac{\mu(r)}{\mu(s)}^{aq+a} \nu(s)^{\eps (q+1)} \beta(s)^q\\
         & \quad + 2^q 3^q c C^{q+1} \delta^q \|\phi-\bar \phi\|' \|\xi\|
            \pfrac{\mu(r)}{\mu(s)}^{aq+a} \nu(s)^{\eps (q + 1)}
            \beta(s)^q
      \end{split}
   \end{equation}
   and using~\eqref{loc:equa:fund}, the last estimate,~\eqref{loc:eq:MajP3} and
   taking into account that $a-b < 0$, we get
   \begin{equation} \label{norm:phi(s,xi)-bar_phi(s,xi)}
      \begin{split}
         & \|\phi(s,\xi)-\bar \phi(s,\xi)\|\\
   		& \le 3^{q+1} C^{q+1} D \delta^q \|f-\bar f\|' \|\xi\|
            \mu(s)^{-aq} \nu(s)^{\eps (q + 1)} \beta(s)^q
            \int_s^{+\infty} \mu(r)^{aq} \nu(r)^\eps  \dr\\
   		& \quad + 2^q 3^{q+1}c C^{q} D \delta^q
            \|x_\phi-x_{\bar\phi}\|' \|\xi\|
            \mu(s)^{-aq} \nu(s)^{\eps (q+1)} \beta(s)^q
            \int_s^{+\infty} \mu(r)^{aq} \nu(r)^\eps  \dr\\
         & \quad + 2^q 3^q c C^{q+1} D \delta^q \|\phi-\bar \phi\|' \|\xi\|
            \mu(s)^{-aq} \nu(s)^{\eps (q + 1)} \beta(s)^q \int_s^{+\infty}
            \mu(r)^{aq} \nu(r)^\eps  \dr\\
         & \le 3^{q+1} C^{q+1} D \delta^q \|f-\bar f\|' \|\xi\|
            + 2^q 3^{q+1}c C^q D \delta^q \|x_\phi-x_{\bar\phi}\|' \|\xi\|\\
         & \quad + 2^q 3^q c C^{q+1} D \delta^q \|\phi-\bar \phi\|' \|\xi\|.
      \end{split}
   \end{equation}
   Now, we will estimate $\|x_\phi-x_{\bar \phi}\|'$. By~\eqref{eq:split-3a},
   ~\eqref{norm_f_x,phi_-_bar-f_bar_x,bar-phi} and~\eqref{eq:dich-1} we obtain
   for every $t \ge s$
	\begin{equation*}
		\begin{split}
         & \pfrac{\mu(t)}{\mu(s)}^{-a} \nu(s)^{-\eps}
            \|x_{\phi}(t,\xi)-x_{\bar\phi}(t,\xi)\| \\
   		& \le \pfrac{\mu(t)}{\mu(s)}^{-a} \nu(s)^{-\eps}
            \int_s^t \|U(t,r)\| \cdot \|f_{x_\phi,\phi}(r,\xi)
            - \bar f_{x_{\bar \phi},\bar \phi}(r,\xi)\| \dr \\
   		& \le 3^{q+1} C^{q+1} D \delta^q \|f-\bar f\|' \|\xi\|
            \mu(s)^{-aq} \nu(s)^{\eps q} \beta(s)^q
            \int_s^t \mu(r)^{aq} \nu(r)^\eps  \dr\\
   		& \quad + 2^q  3^{q+1}c C^{q} D \delta^q
            \|x_\phi-x_{\bar\phi}\|' \|\xi\|
            \mu(s)^{-aq} \nu(s)^{\eps q} \beta(s)^q
            \int_s^t \mu(r)^{aq} \nu(r)^\eps  \dr\\
         & \quad + 2^q 3^q c C^{q+1} D \delta^q \|\phi-\bar \phi\|' \|\xi\|
            \mu(s)^{-aq} \nu(s)^{\eps q} \beta(s)^q \int_s^t \mu(r)^{aq}
            \nu(r)^\eps  \dr\\
         & \le 3^{q+1} C^{q+1} D \delta^q \|f-\bar f\|' \|\xi\|
            + 2^q 3^{q+1}c C^q D \delta^q \|x_\phi-x_{\bar\phi}\|' \|\xi\|\\
         & \quad + 2^q 3^q c C^{q+1} D \delta^q \|\phi-\bar \phi\|' \|\xi\|
   	\end{split}
	\end{equation*}
   and using~\eqref{loc:equa:fund} this implies that
	\begin{equation*}
		\begin{split}
         & \|x_\phi - x_{\bar \phi}\|'\\
         & \le 3^{q+1} C^{q+1} D \delta^q \|f-\bar f\|'
            + 2^q 3^{q+1}c C^q D \delta^q \|x_\phi-x_{\bar\phi}\|'
            + 2^q 3^q c C^{q+1} D \delta^q \|\phi-\bar \phi\|'.
   	\end{split}
	\end{equation*}
   Thus, for $\delta>0$ such that $2^q 3^{q+1}c C^q D \delta^q < 1/2$ we have
      $$ \|x_\phi - x_{\bar \phi}\|'
         \le 2 \cdot 3^{q+1} C^{q+1} D \delta^q \|f-\bar f\|'
            + 2^{q+1} 3^q c C^{q+1} D \delta^q \|\phi-\bar \phi\|'.$$
   It follows from the last estimate,~\eqref{norm:phi(s,xi)-bar_phi(s,xi)} and
   $2^q 3^{q+1}c C^q D \delta^q < 1/2$ that
      $$ \|\phi(s,\xi) - \bar\phi(s,\xi)\|'
         \le 2 \cdot 3^{q+1} C^{q+1} D \delta^q \|f-\bar f\|' \|\xi\|
            + 2^{q+1} 3^q c C^{q+1} D \delta^q \|\phi-\bar\phi\|' \|\xi\|$$
   for every $(s, \xi) \in G_{\delta,\beta}$.
   Hence we get
      $$ \|\phi - \bar\phi\|'
         \le 2 \cdot 3^{q+1} C^{q+1} D \delta^q \|f-\bar f\|'
            + 2^{q+1} 3^q c C^{q+1} D \delta^q \|\phi-\bar\phi\|'$$
   and if we choose $\delta>0$ sufficiently small such that $2^{q+1} 3^q c
   C^{q+1} D \delta^q < 1/2$  we obtain
   	$$	\|\phi-\bar\phi\|'
         \le 4 \cdot 3^{q+1} C^{q+1} D \delta^q \|f-\bar f\|' $$
	and this proves the theorem.
\end{proof}

\section*{Acknowledgments}
This work was partially supported by FCT though Centro de Ma\-te\-m\'a\-ti\-ca da Universidade da Beira Interior (project PEst-OE/MAT/UI0212/2011).
\bibliographystyle{elsart-num-sort}

\end{document}